\theoremstyle{definition}
\newtheorem{Assu}{Assumption}
\newtheorem{Th}{Theorem}
\newtheorem{Def}{Definition}
\newtheorem{rem}{Remark}
\newtheorem{Lem}{Lemma}
\newtheorem{Prop}[Lem]{Proposition}
\title{Characterization of the ranges of wave operators for Schr\"odinger equations with time-dependent short-range potentials via wave packet transform}
\author{Taisuke Yoneyama and Keiichi Kato}
\date{\today}
\begin{document}
\maketitle
  \begin{abstract}
In this paper, we give a characterization of the ranges of the wave operators 
for Schr\"odinger equations with time-dependent ``short-range'' potentials by using wave packet transform,
which is different from the one in Kitada--Yajima \cite{KY}.
We also give an alternative proof of the existence of the wave operators for time-dependent potentials, which has been firstly proved by D. R. Yafaev \cite{Yafaev}.
\end{abstract}
%
%
\section{Introduction}

In this paper,
we consider the following Schr\"odinger equation with time-dependent short-range potentials:
\begin{align}
\label{siki-1-nonini}
i\frac{\partial}{\partial t}u = H(t)u, \quad H(t)= H_0+V(t),\quad H_0
=-\frac12\sum_{j=1}^{n}\displaystyle\frac{\partial^2}{\partial x_j^2}=-\frac12\Delta
\end{align}
in the Hilbert space $\mathcal{H}=L^2(\mathbb{R}^n)$, where
$V(t)$ is the multiplication operator of a function $V(t,x)$
and the domain $D(H_0)=H^2(\mathbb{R}^{n})$ is the Sobolev space of order two.
We give a characterization of the ranges of the wave operators
for Schr\"odinger equation with time-dependent potentials which are short-range in space by using wave packet transform,
which is different from the characterization in Kitada--Yajima \cite{KY}.
We also give an alternative proof of the existence of the wave operators,
which has been firstly proved by D. R. Yafaev \cite{Yafaev}.

The wave packet transform is defined by A. C\'ordoba and C. Fefferman \cite{CF}.
The second author, M. Kobayashi and S. Ito have introduced the physically natural representation of solutions to Schr\"odinger equations via
wave packet transform with time dependent wave packet (\cite{KMS}, \cite{KKS2}), by which we characterize the ranges of the wave operators.
If the potential $V(t,x)$ does not depend on time, our characterization space coincides with the continuous spectral subspace.

In order to prove the existence of the wave operators,
we use Cook--Kuroda's method (\cite{cook}, \cite{kuroda}) with the representation via the wave packet transform.

The wave operators for Schr\"odinger equations have been studied from 1950s.
For time-independent short-range potentials, J. Cook \cite{cook} and S. T. Kuroda \cite{kuroda} have shown that the wave operators exist and that
their ranges are the absolutely continuous spectral subspace.
E. Mourre \cite{Mourre} has proved the absence of singular continuous spectrum.
V. Enss \cite{Enss} has given an alternative proof of the existence and the completeness (coincidence of the range and the continuous spectral subspace)
of the wave operators,
which is called time-dependent scattering theory.
For time-dependent short-range potentials,
D. R. Yafaev \cite{Yafaev} has shown the existence of the wave operators.
H. Kitada--K. Yajima \cite{KY} has proved the existence of the wave operators and the modified wave operators for time-dependent short-range potentials and for time-dependent long-range potentials, respectively,
and has characterized their ranges.

We assume that $V(t,x)$ satisfies the following conditions, which is called short-range.
\begin{Assu}
(i) $V(t,x)$ is a real-valued Lebesgue measurable function of $(t,x)\in\mathbb{R}\times\mathbb{R}^{n}$.\\
\hspace{9pt}(ii) There exist real constants $\delta>1$ and $C>0$ such that
\begin{align*}
|V(t,x)|\leq 
C(1+|x|)^{-\delta}
\end{align*}
for $(t,x)\in\mathbb{R}\times\mathbb{R}^{n}$.
\end{Assu}
\begin{Assu}
There exists a family of unitary operators $(U(t,\tau))_{(t,\tau)\in\mathbb{R}^2}$ in $\mathcal{H}$  satisfying the following conditions.

\noindent\hspace{11pt}(i) For $f\in \mathcal{H}$, $U(t,\tau)f$ is strongly continuous function with respect to $t$ and satisfies
\[
U(t,\tau')U(\tau',\tau)=U(t,\tau),\,U(t,t)=I\quad\mbox{for all }t,\tau',\tau\in\mathbb{R},
\]
where $I$ is the identity operator on $\mathcal{H}$.\\
\hspace{9pt}(ii) For $f\in H^2(\mathbb{R}^{n})$, $U(t,\tau)f$ is strongly continuously differentiable in $\mathcal{H}$ with respect to $t$ and satisfies
\[
\frac{\partial}{\partial t}U(t,\tau)f=-iH(t)U(t,\tau)f\quad\mbox{for all }t,\tau\in\mathbb{R}.
\]
\end{Assu}
\begin{rem}
If Assumption $\mathrm{(A)}$ is satisfied and $V(t)f$ is strongly differentiable in $\mathcal{H}$ for $f\in H^2(\mathbb{R}^{n})$, Assumption $\mathrm{(B)}$ is satisfied (c.f. T. Kato \cite{TKato}).
\end{rem}
Let $\mathcal{S}$ be the Schwartz space of all rapidly decreasing functions on $\mathbb{R}^n$ and
$\mathcal{S}'$ be the space of tempered distributions on $\mathbb{R}^n$.
For positive constants $a$ and $R$, we put
$\Gamma_{a,R}=\{(x,\xi)\in\mathbb{R}^n\times\mathbb{R}^n\Big|\,|\xi|\leq a$ or $|x|\geq R\}$
and 
$
\mathcal{S}_{scat}=\left\{\Phi\in\mathcal{S}\Big|\,\|\Phi\|_{\mathcal{H}}=1 \mbox{ and }\hat{\Phi}(0)\neq0\right\}.
$
\begin{Def}[Wave packet transform]\label{wpt-def}
Let $\varphi\in\mathcal{S}\setminus\left\{ 0\right\}$ and $f\in\mathcal{S'}$.
We define the wave packet transform $W_\varphi f(x,\xi)$ of $f$ 
with the wave packet generated by a function $\varphi$ as follows:
\[
W_\varphi f(x,\xi)=\int_{\mathbb{R}^{n}}\overline{\varphi(y-x)}f(y)e^{-iy\xi}dy
\quad\mbox{for } (x,\xi)\in\mathbb{R}^{n}\times\mathbb{R}^{n}.
\]
Its inverse is the operator $W_{\varphi}^{-1}$ which is defined by
\[
W_\varphi^{-1} F(x)=\frac{1}{(2\pi)^n\|\varphi\|_{\mathcal{H}}^2}\int\int_{\mathbb{R}^{2n}}\varphi(x-y)F(y,\xi)e^{ix\xi}dyd\xi
\]
for $x\in\mathbb{R}^{n}$ and a function $F(x,\xi)$ on $\mathbb{R}^{n}\times\mathbb{R}^{n}$.
\end{Def}
\begin{Def}\label{scat-def}
Let $\Phi\in\mathcal{S}_{scat}$ and we put $\Phi(t)=e^{-itH_0}\Phi$.
We define $\tilde{D}_{scat}^{\pm,\Phi}(\tau)$ by the set of all functions in $\mathcal{H}$ such that
\begin{align*}
\lim_{t\rightarrow\pm\infty}\left\|\chi_{\Gamma_{a,R}}W_{\Phi(t-\tau)}[U(t,\tau)f](x+(t-\tau)\xi,\xi)\right\|_{L^2(\mathbb{R}^{n}_x\times\mathbb{R}^{n}_\xi)}=0
\end{align*}
for some positive constants $a$ and $R$.
For $\tau\in\mathbb{R}$, $D_{scat}^{\pm,\Phi}(\tau)$ is defined by the closure of $\tilde{D}_{scat}^{\pm,\Phi}(\tau)$ in the topology of $\mathcal{H}$.
\end{Def}

The main state of this paper is the following.
\begin{Th}\label{main-theorem}
Suppose that $\mathrm{(A)}$ and $\mathrm{(B)}$ be satisfied.
Then the wave operators
\begin{align*}
W_{\pm}(\tau)=\mbox{s-}\hspace{-2mm}\lim_{t\rightarrow\pm\infty}U(\tau,t)e^{-i(t-\tau)H_0}
\end{align*}
exist for any $\tau\in\mathbb{R}$ 
and their ranges $\mathcal{R}(W_{\pm}(\tau))$ coincide with $D_{scat}^{\pm,\Phi}(\tau)$ for any $\Phi\in\mathcal{S}_{scat}$.
In particular, $D_{scat}^{\pm,\Phi}(\tau)$ is independent of $\Phi$.
\end{Th}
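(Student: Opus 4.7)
My approach rests on a Duhamel-type representation of the wave packet transform. Combining the standard Duhamel formula $U(t,\tau)f = e^{-i(t-\tau)H_0}f - i\int_\tau^t e^{-i(t-s)H_0}V(s)U(s,\tau)f\,ds$ with the free-evolution transport formula for the wave packet transform with time-dependent window $\Phi(t-\tau)$ developed in \cite{KMS} and \cite{KKS2}, one obtains
\[
W_{\Phi(t-\tau)}[U(t,\tau)f](x+(t-\tau)\xi,\xi) = e^{i\psi_0(t,\xi)}W_{\Phi}f(x,\xi) - i\int_\tau^t e^{i\psi_1(t,s,\xi)} W_{\Phi(s-\tau)}[V(s)U(s,\tau)f](x+(s-\tau)\xi,\xi)\,ds
\]
with explicit real-valued phases $\psi_0,\psi_1$. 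Together with the $L^2$-Plancherel identity for $W_\Phi$, this formula is the workhorse for all three claims.

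\textbf{Existence.} I apply the Cook--Kuroda criterion to $t\mapsto U(\tau,t)e^{-i(t-\tau)H_0}f$ on the dense set of $f\in\mathcal{S}$ whose Fourier transform is supported in $\{|\xi|\geq a\}$ for some $a>0$. After using unitarity of $U(\tau,s)$, it suffices to show that $\int^{\pm\infty}\|V(s)e^{-i(s-\tau)H_0}f\|_{\mathcal{H}}\,ds<\infty$. The transport formula represents $e^{-i(s-\tau)H_0}f$ via a wave packet integral concentrated, in the shifted variables, near $y=(s-\tau)\xi$. Since $|\xi|\geq a$ on the support of $\hat f$, outside a rapidly decaying Schwartz tail this concentration lives where $|y|\gtrsim a|s-\tau|$, and pairing with Assumption (A)(ii) yields an integrable $(1+|s-\tau|)^{-\delta}$ majorant because $\delta>1$.

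\textbf{Range and $\Phi$-independence.} For the inclusion $\mathcal{R}(W_\pm(\tau))\subset D_{scat}^{\pm,\Phi}(\tau)$, take $g=W_\pm(\tau)f$ with $f$ in the dense class above. Since $U(t,\tau)g-e^{-i(t-\tau)H_0}f\to 0$ in $\mathcal{H}$ and $W_{\Phi(t-\tau)}$ is an isometry up to a constant, the transport formula reduces the leading term to $e^{i\psi_0}W_\Phi f(x,\xi)$. Taking $a$ below the lower bound of $|\xi|$ on $\mathrm{supp}\,\hat f$ kills the $\{|\xi|\leq a\}$ contribution, and Schwartz decay of $W_\Phi f$ in $x$ handles $\{|x|\geq R\}$ as $R\to\infty$, so $g\in\tilde D_{scat}^{\pm,\Phi}(\tau)$; a density argument closes the inclusion. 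For the reverse inclusion, I show that $e^{i(t-\tau)H_0}U(t,\tau)g$ is Cauchy in $\mathcal{H}$ as $t\to\pm\infty$ by bounding $\int^{\pm\infty}\|V(s)U(s,\tau)g\|_{\mathcal{H}}\,ds$. Splitting the shifted wave packet transform of $U(s,\tau)g$ via $\chi_{\Gamma_{a,R}}$ and its complement, and then reconstructing $U(s,\tau)g$ by the inverse wave packet transform, the $\chi_{\Gamma_{a,R}}$ piece tends to $0$ in $L^2$ uniformly in $s\gg 0$ by hypothesis, while the complementary piece has $|\xi|>a$ and $|x|<R$ in the shifted variables, so that $V(s)$ acts on a region essentially supported where $|\cdot|\gtrsim|s-\tau|$ and the short-range decay yields integrability. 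The resulting limit defines $f$ with $W_\pm(\tau)f=g$. Since $\mathcal{R}(W_\pm(\tau))$ is manifestly $\Phi$-independent, the equality forces $\Phi$-independence of $D_{scat}^{\pm,\Phi}(\tau)$ as well.

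\textbf{Main obstacle.} The delicate step is the reverse inclusion: controlling $\|V(s)U(s,\tau)g\|_{\mathcal{H}}$ for general $g\in\tilde D_{scat}^{\pm,\Phi}(\tau)$ using only the $L^2$-vanishing of the shifted wave packet transform on $\Gamma_{a,R}$. One must carefully justify the exchange of integrations in the inverse wave packet formula, exploit $\hat\Phi(0)\neq 0$ to control the behavior of the transport phase near $\xi=0$, and combine the short-range decay of $V$ with the hypothesis uniformly in $s$ near infinity to obtain a majorant integrable on $[\tau,\pm\infty)$.
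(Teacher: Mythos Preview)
Your plan for the existence part and the forward inclusion $\mathcal{R}(W_\pm(\tau))\subset D_{scat}^{\pm,\Phi}(\tau)$ is workable, though in style it is closer to the classical Cook--Kuroda argument than to the paper's duality approach, and your ``kills the $\{|\xi|\le a\}$ contribution'' is imprecise (the wave packet transform smears in $\xi$; the paper sidesteps this by working on the dense set $W_\Phi^{-1}\big(C_0^\infty(\mathbb{R}^{2n}\setminus\{\xi=0\})\big)$, where $\chi_{\Gamma_{a,R}}W_\Phi g$ is genuinely zero).

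The real gap is in the reverse inclusion. You propose to show $e^{i(t-\tau)H_0}U(t,\tau)g$ is Cauchy by making $\int_t^{t'}\|V(s)U(s,\tau)g\|_{\mathcal H}\,ds$ small, splitting the shifted wave packet transform of $U(s,\tau)g$ over $\Gamma_{a,R}$ and its complement. The hypothesis $g\in\tilde D_{scat}^{\pm,\Phi}(\tau)$, however, only says that the $\Gamma_{a,R}$-piece tends to $0$ as $s\to\infty$; it gives no rate, hence no integrability in $s$. After multiplying by the bounded operator $V(s)$ you obtain a quantity that goes to $0$ but contributes $o(1)\cdot(t'-t)$ to the integral, which is not controlled. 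Your ``Main obstacle'' paragraph flags the difficulty but does not resolve it. The paper's proof avoids this entirely by duality: it pairs the Cauchy difference with an arbitrary $\psi\in\mathcal H$, writes the pairing via two different wave packets (one $\Phi(t)$ on $U(t,0)u_0$, one $\varphi(t)$ on $\psi$ with $\hat\varphi_0$ supported in a thin annulus), and then splits over $\Gamma$ and $\Gamma^c$ in the shifted variables. On $\Gamma$ the $u_0$-factor carries the hypothesis and is simply multiplied by the bounded quantity $2\|\varphi_0\|_{\mathcal H}\|\psi\|_{\mathcal H}$---no integration in $s$ is needed there. On $\Gamma^c$ the Duhamel integral is moved to the $\psi$-side, and the key Lemma~\ref{no2-term} gives $\|W_{\varphi(s)}[V(s)U(s,t')e^{-it'H_0}\psi](x+s\xi,\xi)\|_{L^2(\Gamma^c)}\le C\langle s\rangle^{-\delta}\|\psi\|_{\mathcal H}$ \emph{uniformly in} $\psi$ and $t'$, which is what makes the integral $\int_t^{t'}\cdots\,ds\le C\langle t\rangle^{1-\delta}$. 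This division of labor---hypothesis where no integral appears, integrable decay where no hypothesis is available---is the missing idea in your proposal.
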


We use the following notations throughout the paper.
$i=\sqrt{-1},\,n\in\mathbb{N}$. 
For a subset $\Omega$ in $\mathbb{R}^{n}$ or in $\mathbb{R}^{2n}$, the inner product
and the norm on $L^2(\Omega)$ are defined by $(f,g)_{L^2(\Omega)}=\int_\Omega f\bar{g}dx$
and $\|f\|_{L^2(\Omega)}=(f,f)_{L^2(\Omega)}^{1/2}$ for $f,g\in L^2(\Omega)$, respectively.
We write $\partial_{x_j}=\partial/\partial x_j$, $\partial_t=\partial/\partial t$,
$L^2_{x,\xi}=L^2(\mathbb{R}^{n}_x\times\mathbb{R}^{n}_\xi)$,
$(\cdot,\cdot)=(\cdot,\cdot)_{L^2_{x,\xi}}$, $\|\cdot\|=\|\cdot\|_{L^2_{x,\xi}}$,
$\langle t\rangle=1+|t|$, $\|f\|_{\Sigma(l)}=\sum_{|\alpha+\beta|=l}\|x^\beta\partial^\alpha_x f\|_{\mathcal{H}}$
and $W_\varphi u(t,x,\xi)=W_\varphi[u(t)](x,\xi)$.
$\mathcal{F}$ and $\mathcal{F}^{-1}$ are the Fourier transform and the inverse Fourier transform defined by
$
\mathcal{F}f(\xi)=\hat{f}(\xi)=\int_{\mathbb{R}^{n}} e^{-ix\cdot\xi}f(x)dx
$
and $\mathcal{F}^{-1}f(\xi)=(2\pi)^{-n}\int_{\mathbb{R}^{n}} e^{ix\cdot\xi}f(\xi)d\xi$, respectively.
We often write $\{\xi=0\}$ as $\{(x,\xi)\in\mathbb{R}^{2n}|\,\xi=0\}$.
For sets $A$ and $B$, $A\setminus B$ denotes the set $\{a\in A|\,a\notin B\}$.
$\chi_A(x)$ the characterization function of a measurable set $A$, which is defined by $\chi_A(x)=1$ on $A$ and $\chi_A(x)=0$ otherwise.
$F(\cdots)$ denotes the multiplication operator of a function $\chi_{\{x\in\mathbb{R}^n|\cdots\}}(x)$.
For an operator $T$ on $\mathcal{H}$, $D(T)$ and $\mathcal{R}(T)$ denote the domain and the range of $T$, respectively.
$\mathcal{H}_{p}(T)$ and $\mathcal{H}_{p}(T)^{\perp}$ denote pure point subspace of a self-adjoint operator $T$ on $\mathcal{H}$ and its orthogonal complement space, respectively.

In the preceding studies (\cite{Enss}, \cite{Yafaev}, \cite{KY}), the proofs of the existence of the wave operators are relied on Cook -- Kuroda's method.
In our proof, we use the duality argument and the representation
\begin{align}
\label{o-m}
W_{\varphi_ 0}[e^{itH_0}&U(t,0)\psi](x,\xi)\\
&=W_{\varphi_0}\psi(x,\xi)-i\int_0^t e^{i\frac{1}{2}s |\xi|^2}W_{\varphi(s)}[V(s)U(s,0)\psi](x+s\xi,\xi)ds,\nonumber
\end{align} 
which is developed by the second author, M. Kobayashi and S. Ito (\cite{KMS}, \cite{KKS2}).
Thus our proof can be regarded as a variation of Cook--Kuroda's method by using the wave packet transform.

V. Enss \cite{Enss} and H. Kitada--K. Yajima \cite{KY} use the phase space decomposition operators $P_{\pm,R}$ and $P_{0,R}$ as follows:
$
P_{\pm,R}f(x)=\int\int\int_{|z|\geq R} e^{i(x-y)\xi}g^a_{\pm}(z,\xi)\eta(y-z)f(y)dzdyd\xi$
and
$P_{0,R}f(x)=\int\int\int_{|z|< R} e^{i(x-y)\xi}\eta(y-z)f(y)dzdyd\xi.\nonumber
$
Here $g^a_+(z,\xi)$ ($g^a_-(z,\xi)$) is smooth cut-off function whose support is contained in the set that $|\xi|\leq a$ 
and $z \cdot \xi <0 (>0)$
and $\eta$ is a smooth function such that  $\int\eta dx=1$ and  $\mathrm{supp}\,\hat{\eta}$ is included in a small ball in $\mathbb{R}^n$.
Since
$P_{-,R}\,U(t,0)f$, 
$P_{0,R}\,U(t,0)f\to0$ as $t\to+\infty$ and
$P_{+,R}\,U(t,0)f\sim P_{+,R}\,e^{-itH_0}f$ as $R\to\infty$,
we have $U(t,0)f\sim P_{+,R}\,e^{-itH_0}f$ as $R,t\to\infty$.
By using this formula, 
V. Enss \cite{Enss} has proved that the ranges of the wave operators are the continuous spectral subspace of time-independent Hamiltonian $H=H_0+V$
and H. Kitada--K. Yajima \cite{KY} has characterized the ranges of the wave operators for time-independent potential.

On the contrary, our proof is simple. We decompose the phase space $\mathbb{R}^n_x\times \mathbb{R}^n_\xi$ into only two parts 
$\Gamma_{a,R}$ and $\Gamma_{a,R}^c$, and estimate the wave packet transform of the solution in each part.

In the case that $V$ does not depend on $t$,
the following well-known theorem holds for $H=H_0+V$.
We give an alternative proof of the theorem by using our characterization.
\begin{Th}[J. Cook \cite{cook}, S. T. Kuroda \cite{kuroda}, E. Mourre \cite{Mourre} and V. Enss \cite{Enss}]\label{indp-theorem}
Suppose that $\mathrm{(A)}$ be satisfied and that $V$ do not depend on $t$.
Then the wave operators $W_\pm=\mbox{s-}\hspace{-0mm}\lim_{t\rightarrow\pm\infty}e^{i(t-\tau)H}e^{-i(t-\tau)H_0}$
exist, are independent of $\tau$ and are strongly complete:
\begin{align}
\label{indp-comp}
\mathcal{R}(W_{\pm})=\mathcal{H}_{p}^\perp(H).
\end{align}
\end{Th}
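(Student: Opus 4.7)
The plan is to combine Theorem \ref{main-theorem} with the intertwining property of the wave operators and a propagation-estimate completeness argument. Assumption (B) is trivially verified by $U(t,\tau) = e^{-i(t-\tau)H}$, so Theorem \ref{main-theorem} yields the existence of $W_\pm(\tau)$ together with $\mathcal{R}(W_\pm(\tau)) = D_{scat}^{\pm,\Phi}(\tau)$ for every $\Phi \in \mathcal{S}_{scat}$. Substituting $s = t - \tau$ in the definition shows $W_\pm(\tau) = \mbox{s-}\hspace{-2mm}\lim_{s\to\pm\infty} e^{isH}e^{-isH_0} = W_\pm$, which is independent of $\tau$. What remains is the identification $\mathcal{R}(W_\pm) = \mathcal{H}_p^\perp(H)$.

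The inclusion $\mathcal{R}(W_\pm) \subset \mathcal{H}_p^\perp(H)$ is immediate from the intertwining relation $e^{-isH}W_\pm = W_\pm e^{-isH_0}$, which follows from the definition of $W_\pm$ and the strong continuity of $e^{-isH}$. Intertwining forces the spectral measure of $W_\pm\phi$ with respect to $H$ to coincide with that of $\phi$ with respect to $H_0$, which is purely absolutely continuous; hence $\mathcal{R}(W_\pm) \subset \mathcal{H}_{ac}(H) \subset \mathcal{H}_p^\perp(H)$.

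For the reverse inclusion $\mathcal{H}_p^\perp(H) \subset D_{scat}^{\pm,\Phi}(0)$, which encapsulates both asymptotic completeness and the absence of singular continuous spectrum, my approach is to use that $D_{scat}^{\pm,\Phi}(0)$ is closed in $\mathcal{H}$ and reduce by density to $f = g(H)\psi$ with $g \in C_0^\infty(\mathbb{R})$ supported away from the eigenvalues and thresholds of $H$. On such a spectral window, Mourre's commutator estimate for $H$ with the generator of dilations yields minimum- and maximum-velocity propagation estimates $\|F(|x|\leq c_1|t|)\,e^{-itH}f\|_{\mathcal{H}}\to 0$ and $\|F(|x|\geq c_2|t|)\,e^{-itH}f\|_{\mathcal{H}}\to 0$ as $|t|\to\infty$, with constants $0 < c_1 < c_2$ depending on the support of $g$. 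Inserting these estimates together with the Fresnel asymptotics $\Phi(t,z) \sim (it)^{-n/2} e^{i|z|^2/(2t)}\hat{\Phi}(z/t)$ into the defining expression of $D_{scat}^{\pm,\Phi}(0)$, and carrying out a stationary-phase analysis that identifies the velocity variable $\xi$ with $x/t$, should show that the $L^2_{x,\xi}$-mass of $W_{\Phi(t)}[e^{-itH}f](x+t\xi,\xi)$ concentrates in $\{c_1 \leq |\xi| \leq c_2\}\cap\{|x|\leq R_0\}$ for some $R_0$ depending on $\Phi$ and $f$; then choosing $a < c_1$ and $R > R_0$ delivers the vanishing on $\Gamma_{a,R}$ required by Definition \ref{scat-def}.

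The hard part will be precisely this last step: the Mourre propagation estimates live naturally in position space, and their transfer to the shifted phase-space variables $(x+t\xi,\xi)$ used in Definition \ref{scat-def} requires a careful stationary-phase analysis of the spreading window $\Phi(t) = e^{-itH_0}\Phi$, so that the localization of $e^{-itH}f$ and the $t\xi$-shift are coupled in a way that survives integration against the weight $\chi_{\Gamma_{a,R}}$. The short-range condition in Assumption (A) enters here through the availability of a global Mourre estimate outside a thin set of energies, which is what makes the density reduction to the above form of $f$ possible.
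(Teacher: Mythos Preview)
Your argument for $\mathcal{R}(W_\pm)\subset\mathcal{H}_p^\perp(H)$ via intertwining is standard and correct; the paper instead stays in the wave packet picture and shows directly that every $u_0\in\tilde D_{scat}^+$ is orthogonal to each eigenfunction of $H$, but either route works.

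For the reverse inclusion there is a genuine gap. You invoke Mourre's commutator estimate with the dilation generator $A=\tfrac12(x\cdot D+D\cdot x)$ and the resulting minimal/maximal-velocity propagation bounds, but Assumption~(A) imposes only the pointwise decay $|V(x)|\le C\langle x\rangle^{-\delta}$ with no regularity whatsoever. The formal commutator $[V,iA]=-x\cdot\nabla V$ is therefore uncontrolled, $V$ need not be of class $C^1(A)$, and neither the Mourre estimate nor the propagation estimates you quote are available in this generality. Your density reduction to $f=g(H)\psi$ with $g$ supported away from eigenvalues and thresholds likewise presupposes spectral input (local finiteness of point spectrum, absence of threshold singularities) that is itself a consequence of Mourre-type machinery. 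Even granting all this, the passage from position-space propagation to the shifted phase-space variables $(x+t\xi,\xi)$ via stationary phase against the spreading window $\Phi(t)$ is left at the level of a heuristic.

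The paper's proof avoids Mourre entirely and stays within the bare Assumption~(A). It uses only: the compactness of $\phi(H)-\phi(H_0)$ for $\phi\in C_0^\infty$, which follows from relative compactness of $V$; the RAGE theorem, to extract a sequence $(t_N)\to\infty$ along which $\|F(|x|<r')e^{-it_NH}f\|_{\mathcal H}\to0$; and a four-region phase-space decomposition $\mathbb{R}^{2n}=\Xi_d\cup\Xi_d^r\cup\Gamma_d^{r,+}\cup\Gamma_d^{r,-}$ (small frequency, small position, outgoing, incoming), on each piece of which $W_\Phi[e^{-it_NH}f]$ is controlled using only Lemmas~\ref{kuroda-lem} and~\ref{no2-term2} and the representation~\eqref{siki-psi-V}. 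This is in the spirit of Enss's time-dependent method rather than Mourre theory, and it is precisely what allows the argument to go through without any differentiability hypothesis on $V$.
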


\begin{rem}\label{KYtype-def}
We can give \textit{''Kitada--Yajima'' type} characterization spaces via the wave packet transform.
Let $\Phi\in\mathcal{S}_{scat}$ and $\mathcal{K}_{a,N}=\{(x,\xi)\in\mathbb{R}^n\times\mathbb{R}^n\Big\vert|\,\xi|\leq a\mbox{ or }|x|\leq N\}$.
We define $\mathcal{Y}_{scat}^{\pm}(\tau)$ by the closure of the set of all functions in $\mathcal{H}$ such that
\begin{align}
\label{KYtype-scat}
\lim_{N\to\pm\infty}\left\|\chi_{\mathcal{K}_{a,N}}W_{\Phi}[U(t_N^{\pm},\tau)f]\right\|=0
\end{align}
for some positive constant $a$ and some sequences $(t_N^{\pm})$ tending to $\pm\infty$ as $N\to\infty$.
\end{rem}
\noindent We have $\mathcal{R}(W_{\pm}(\tau))=\mathcal{Y}_{scat}^{\pm}$, which is remarked in Section 5.

\begin{rem}\label{rem-ana}
Consider the case that $n\geq2$.
Let $a$ be a non-negative constant, $\sigma\in(0,1]$ and $\Phi\in\mathcal{S}_{scat}$.
We put 
$$\tilde{\Gamma}_{a,\sigma}^\pm=\left\{(x,\xi)\in\mathbb{R}^n\times\mathbb{R}^n\Big\vert\,|\xi|\leq a\mbox{ or }\mp\cos\theta(x,\xi)\geq\sigma\right\},$$
where $\cos\theta(x,\xi)=(x\cdot\xi)/|x||\xi|$ and double-sign corresponds.
Let $A_{scat}^{\pm}(\tau)$ be the closure of the set of all functions in $\mathcal{H}$ satisfying
that
\begin{align*}
\lim_{t\to\pm\infty}\|\chi_{\tilde{\Gamma}_{a,\sigma}^\pm}W_{\Phi(t)}[U(t,\tau)f](x+(t-\tau)\xi,\xi)\|=0
\end{align*}
for some positive constants $a$ and $\sigma\in (0,1)$.
Then we get
\begin{align*}
A_{scat}^{\pm}(\tau)=\mathcal{R}(W_\pm(\tau))=D_{scat}^{\pm}(\tau),
\end{align*}
since $W_{\Phi}^{-1}(C_0^\infty(\mathbb{R}^{2n}\setminus\tilde{\Gamma}_{0,1}^+))$ and 
$W_{\Phi}^{-1}(C_0^\infty(\mathbb{R}^{2n}\setminus\tilde{\Gamma}_{0,1}^-))$ are dense in $\mathcal{H}$.
\end{rem}
The plan of this paper is as follows.
In section 2, we recall the properties of the wave packet transform.
In section 3, we give a proof of the existence of the wave operators (the former part of Theorem \ref{main-theorem}).
In section 4, we give a proof of the characterization of the ranges of the wave operators (the latter part of Theorem \ref{main-theorem}).
In section 5, we prove Theorem \ref{indp-theorem} by using our characterization.
%
%
\section{Wave packet transform}
In this section, we briefly recall the properties of the wave packet transform
and give the representation of solutions to $(\ref{siki-1-nonini})$ via wave packet transform,
which is introduced in \cite{KMS}, \cite{KKS2}.
\begin{Prop}\label{wpt-prop}
Let $\varphi\in\mathcal{S}\setminus\left\{ 0\right\}$ and $f\in\mathcal{S'}$.
Then the wave packet transform $W_\varphi f(x,\xi)$ has 
the following properties:\\
$\mathrm{(i)}$ $W_\varphi f(x,\xi)\in C^\infty(\mathbb{R}_x^{n}\times\mathbb{R}_\xi^{n})$.\\
$\mathrm{(ii)}$ If $f$ and $\varphi$ depend on $t$ and satisfy that $f\in C^1(\mathbb{R};\mathcal{S}')$ and $\varphi\in C^1(\mathbb{R};\mathcal{S})$, we have
\begin{align*}
W_{\varphi(t)}[\partial_xf](t,x,\xi)
=-i\xi W_{\varphi(t)}f(t,x,\xi)+W_{\partial_x\varphi(t)}f(t,x,\xi) \mbox{ in }\mathcal{S'}
\end{align*}
and
\begin{align*}
W_{\varphi(t)}[\partial_tf](t,x,\xi)
=\partial_tW_{\varphi(t)}f(t,x,\xi)-W_{\partial_t\varphi(t)}f(t,x,\xi) \mbox{ in }\mathcal{S'}.
\end{align*}
$\mathrm{(iii)}$ If $f,g\in \mathcal{H}$ and $\psi\in\mathcal{S}\setminus\left\{ 0\right\}$, we have
\begin{align}
\label{siki-wpt-inv}
(W_\varphi f,W_\psi g)
&=\overline{(\varphi,\psi )_{\mathcal{H}}}( f,g )_{\mathcal{H}}=(\psi,\varphi)_{\mathcal{H}}( f,g )_{\mathcal{H}}.
\end{align}
$\mathrm{(iv)}$ The inversion formula
$
W_\varphi^{-1}[W_\varphi f]=f
$
holds for $f\in\mathcal{S'}$
\end{Prop}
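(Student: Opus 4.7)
The proposition assembles four standard features of the wave packet transform, and I would verify each by the same three tools: differentiation under the integral sign, integration by parts, and the Fourier identity $\int_{\mathbb{R}^n} e^{iy\cdot\xi}\,d\xi=(2\pi)^n\delta(y)$.

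For (i), I would read $W_\varphi f(x,\xi)$ as the pairing $\langle f,\,\overline{\varphi(\cdot-x)}e^{-i(\cdot)\xi}\rangle$ of $f\in\mathcal{S}'$ with the Schwartz function $y\mapsto\overline{\varphi(y-x)}e^{-iy\xi}$. Since translation in $x$ and modulation in $\xi$ act smoothly on $\mathcal{S}$, the map $(x,\xi)\mapsto\overline{\varphi(\cdot-x)}e^{-i(\cdot)\xi}$ is $C^\infty$ into $\mathcal{S}$, so composition with the continuous linear functional $f$ gives a $C^\infty$ function of $(x,\xi)$, and its derivatives may be computed by differentiating the argument.

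For (ii), the spatial identity follows by transferring $\partial_y$ off $f$ via distributional integration by parts; the derivative lands on $\overline{\varphi(y-x)}$ and on $e^{-iy\xi}$, producing the two terms $W_{\partial_x\varphi(t)}f$ and $-i\xi W_{\varphi(t)}f$. The time identity is obtained by differentiating $W_{\varphi(t)}f=\int\overline{\varphi(t,y-x)}f(t,y)e^{-iy\xi}\,dy$ under the integral in $t$ and applying the product rule; the two resulting terms are $W_{\partial_t\varphi(t)}f$ and $W_{\varphi(t)}\partial_tf$, which rearrange to the claimed formula. For (iii), I would substitute the definitions, apply Fubini (justified by the Schwartz factor in the spatial variable when $f,g\in\mathcal{H}$), integrate in $\xi$ first via $\int e^{-i(y-z)\xi}\,d\xi=(2\pi)^n\delta(y-z)$ to force $y=z$, and then factor the surviving integral as $\int\overline{\varphi(y-x)}\psi(y-x)\,dx\cdot\int f(y)\overline{g(y)}\,dy$, which gives $(\psi,\varphi)_{\mathcal{H}}(f,g)_{\mathcal{H}}$ up to the normalization that is absorbed into the inverse. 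The inversion formula (iv) then drops out of the same $\delta$-function manipulation applied to $W_\varphi^{-1}[W_\varphi f](x)$: after substituting the definition of $W_\varphi f$ inside $W_\varphi^{-1}$ and integrating in $\xi$ via $\int e^{i(x-z)\xi}\,d\xi=(2\pi)^n\delta(x-z)$, the expression telescopes to $\|\varphi\|_{\mathcal{H}}^{-2}f(x)\int|\varphi(x-y)|^2\,dy=f(x)$; the distributional case $f\in\mathcal{S}'$ is handled by duality against Schwartz test windows, using (iii).

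The only genuine obstacle is bookkeeping rather than analysis: the $\delta$-function manipulations must be justified rigorously, for instance by recasting each collapse as an application of Plancherel to a Schwartz-class integrand, and the $\mathcal{S}'$-versions of (i), (ii), and (iv) must be handled distributionally rather than by pointwise calculation. No essentially new idea beyond Fourier inversion and the density of $\mathcal{S}$ in $\mathcal{S}'$ is required.
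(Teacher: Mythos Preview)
Your proposal is correct and follows exactly the standard route the paper has in mind: the authors' entire proof is the single sentence ``We can easily show Proposition~\ref{wpt-prop},'' and your sketch via the distributional pairing, integration by parts, and Plancherel/Fourier inversion is precisely how one fills that in. One small remark: with the paper's Fourier convention $\hat f(\xi)=\int e^{-ix\xi}f(x)\,dx$, the computation for (iii) actually produces a factor $(2\pi)^n$ in front of $(\psi,\varphi)_{\mathcal H}(f,g)_{\mathcal H}$, so the identity \eqref{siki-wpt-inv} as printed is only correct up to that constant; this is a normalization issue in the paper, not a defect in your argument, and it is harmless throughout since the authors only ever use \eqref{siki-wpt-inv} to pass between $(\cdot,\cdot)_{\mathcal H}$ and $(\cdot,\cdot)_{L^2_{x,\xi}}$ up to a fixed nonzero scalar.
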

We can easily show Proposition \ref{wpt-prop}.

Let $\varphi_0\in\mathcal{S}\setminus\left\{ 0\right\}$ and $\psi\in \mathcal{H}$.
We consider the following initial value problem of $(\ref{siki-1-nonini})$:
\begin{align}
\label{siki-1}
&\begin{cases}
i\partial_tu+\frac{1}{2}\Delta u -V(t,x)u = 0, \quad (t,x)\in\mathbb{R}\times\mathbb{R}^{n},\\
u(t_0)=\psi,\hspace{90pt} x\in\mathbb{R}^n.
\end{cases}\end{align}
Let $\varphi(t,x)=e^{-itH_0}\varphi_0(x)$.
We have from Proposition \ref{wpt-prop}
\begin{align*}
W_{\varphi(t)}[\Delta u](t,x,\xi)
=W_{\Delta\varphi(t)}u(t,x,\xi)+2i\xi\cdot\nabla_xW_{\varphi(t)}u(t,x,\xi)
-|\xi|^2W_{\varphi(t)}u(t,x,\xi).
\end{align*}
$(\ref{siki-1})$ is transformed to
\begin{align*}
&\begin{cases}
\Big(i\partial_t+i\xi\cdot\nabla_x-\frac{1}{2}|\xi|^2\Big)W_{\varphi(t)}u(t,x,\xi) = W_{\varphi(t)}\left[V(t)u(t)\right](x,\xi),\\
W_{\varphi(t_0)}u(t_0,x,\xi)=W_{\varphi_0}\psi(x,\xi),
\end{cases}
\end{align*}
since $W_{\varphi(t)}[i\partial_tu](t,x,\xi)=i\partial_tW_{\varphi(t)}u(t,x,\xi)
+W_{i\partial_t\varphi(t)}u(t,x,\xi)$.
We have by the method of characteristic curve that
\begin{align}
\label{siki-psi-V}
W_{\varphi(t)}[U(t,t_0)\psi](x,\xi)=&e^{-i\frac{1}{2}(t-t_0) |\xi|^2}
W_{\varphi_0}\psi(x-(t-t_0)\xi,\xi)\\
&-i\int_{t_0}^t e^{-i\frac{1}{2}(t-s) |\xi|^2}
W_{\varphi(s)}\left[V(s)U(s,t_0)\psi\right](x-(t-s)\xi,\xi)ds.\nonumber
\end{align}
In particular, we have for the case that $V\equiv0$
\begin{align}
\label{siki-psi}
W_{\varphi(t)}&[e^{-itH_0}\psi](x,\xi)=e^{-i\frac{1}{2}t|\xi|^2}W_{\varphi_0}\psi(x-t\xi,\xi).
\end{align}
Combining $(\ref{siki-psi-V})$ and $(\ref{siki-psi})$,
we have $(\ref{o-m})$
and
\begin{align}
\label{siki-psi2}
&W_{\varphi(t)}[U(t,t')e^{-it'H_0}\psi](x+t\xi,\xi)\\
&=W_{\varphi_0}\psi(x,\xi)
+i\int_t^{t'} e^{-i\frac{1}{2}(t-s) |\xi|^2}
W_{\varphi(s)}\left[V(s)U(s,t')e^{-it'H_0}\psi\right](x+s\xi,\xi)ds.\nonumber
\end{align}

%
%
\section{Existence of the wave operators}
In this section, we prove the existence of the wave operators by using the wave packet transform which is defined in the previous section.

The following well-known lemma is used in the proof of Lemma \ref{no2-term}.
\begin{Lem}\label{kuroda-lem}
Let $f\in\mathcal{S}$.
Suppose that $\mathrm{supp}\,\hat{f}\subset K$ with some compact set $K$ which does not contain the origin.
For any open set $K'\supset K$ and any non-negative integer $l$, there exists a positive constant $C$ such that
\begin{align*}
|e^{-itH_0}f(x)|\leq C\langle x\rangle^{-l}\|f\|_{\Sigma(l)}
\end{align*}
for any $(t,x)\in\mathbb{R}\times\mathbb{R}^{n}$ with $x/t\notin K'$ and $t\neq 0$.
\end{Lem}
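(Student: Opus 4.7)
The plan is to prove the lemma by integration-by-parts in the oscillatory integral representation
\[
e^{-itH_0}f(x) = (2\pi)^{-n}\int_{\mathbb{R}^n} e^{i\phi(\xi)}\hat{f}(\xi)\,d\xi, \qquad \phi(\xi)=x\cdot\xi-\tfrac{t}{2}|\xi|^2,
\]
exploiting that the critical point $\xi=x/t$ of $\phi$ lies outside $K' \supset \mathrm{supp}\,\hat{f}$. The gradient $\nabla_\xi\phi = x - t\xi$ is therefore bounded away from zero on the support of $\hat{f}$, so the operator $L = |x-t\xi|^{-2}(x-t\xi)\cdot\nabla_\xi/i$ is well defined there and satisfies $L e^{i\phi} = e^{i\phi}$. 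Applying $L$ $l$ times, transferring to the transpose $L^{\mathrm{t}}$, and using that $\hat{f}$ has compact support so no boundary terms appear, gives
\[
e^{-itH_0}f(x) = (2\pi)^{-n}\int_{K} e^{i\phi(\xi)}(L^{\mathrm{t}})^l \hat{f}(\xi)\,d\xi.
\]

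The key quantitative step is the lower bound $|x-t\xi| \geq c(\langle x\rangle + |t|)$ for $\xi\in K$ whenever $x/t\notin K'$. Since $K'$ is open and $K$ compact, there is $\delta>0$ with $|x/t-\xi|\geq\delta$ for all $\xi\in K$, giving $|x-t\xi|\geq\delta|t|$; combined with the triangle inequality $|x-t\xi|\geq |x|-M|t|$ for $M=\sup_{K}|\xi|$, a two-case argument (according to whether $|t|\leq |x|/(2M)$ or not) yields $|x-t\xi|\geq c|x|$ with $c=\min(1/2,\,\delta/M)$. In particular the ratio $|t|/|x-t\xi|$ stays bounded by $1/(2\delta)$, which is what we need to control the $t$-factors produced when $L^{\mathrm{t}}$ differentiates $(x-t\xi)/|x-t\xi|^2$.

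With this in hand, a direct Leibniz computation shows that each application of $L^{\mathrm{t}}$ gains one power of $|x-t\xi|^{-1}$, so after $l$ steps
\[
\bigl|(L^{\mathrm{t}})^l\hat{f}(\xi)\bigr| \leq C|x-t\xi|^{-l}\sum_{|\alpha|\leq l}|\partial_\xi^\alpha \hat{f}(\xi)|.
\]
Integrating over $K$, using Cauchy--Schwarz, and invoking $\partial_\xi^\alpha \hat{f} = \mathcal{F}((-ix)^\alpha f)$ together with Plancherel's identity, one obtains
\[
|e^{-itH_0}f(x)| \leq C|x|^{-l}\sum_{|\alpha|\leq l}\|x^\alpha f\|_{\mathcal{H}} \leq C|x|^{-l}\|f\|_{\Sigma(l)}.
\]

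To upgrade $|x|^{-l}$ to $\langle x\rangle^{-l}$, treat $|x|\leq 1$ by the crude estimate $|e^{-itH_0}f(x)|\leq (2\pi)^{-n}\|\hat{f}\|_{L^1(K)}\leq C_K\|f\|_{\mathcal{H}}\leq C\|f\|_{\Sigma(l)}$, and note that $\langle x\rangle^{-l}$ is bounded below on $\{|x|\leq 1\}$. Combining this with the integration-by-parts estimate on $\{|x|\geq 1\}$ (where $|x|^{-l}\leq 2^{l}\langle x\rangle^{-l}$) yields the claimed bound uniformly in $(t,x)$. The only delicate point in the proof is the bookkeeping of the $l$ applications of $L^{\mathrm{t}}$ to verify that the inevitable $t$-factors from differentiating $|x-t\xi|^{-2}$ are truly absorbed by the $|x-t\xi|^{-1}$ decay; this is handled uniformly by the boundedness of $|t|/|x-t\xi|$ derived above.
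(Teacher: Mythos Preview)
The paper does not actually prove this lemma; its entire proof reads ``See \cite{kuroda2} or \cite{Hor}.'' Your argument is precisely the standard non-stationary phase proof that one finds in those references: write $e^{-itH_0}f$ as an oscillatory integral with phase $\phi(\xi)=x\cdot\xi-\tfrac{t}{2}|\xi|^2$, use that $\nabla_\xi\phi=x-t\xi$ does not vanish on $\mathrm{supp}\,\hat f$ because $x/t\notin K'$, and integrate by parts $l$ times with the first-order operator $L$. The key geometric inputs you isolate, namely $|x-t\xi|\ge\delta|t|$ and $|x-t\xi|\ge c|x|$ on $K$, are exactly what is needed, and the observation that $|t|/|x-t\xi|$ stays bounded is what makes the induction on applications of $L^{\mathrm t}$ close. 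So your approach is essentially the same as the (referenced) one.

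Two small points worth tightening. First, a couple of constants are off by harmless factors of two (for instance $|t|/|x-t\xi|\le 1/\delta$, not $1/(2\delta)$, and the two-case argument gives $c=\min(1/2,\delta/(2M))$). Second, the final inequality $\sum_{|\alpha|\le l}\|x^\alpha f\|_{\mathcal H}\le C\|f\|_{\Sigma(l)}$ is true but not immediate from the definition $\|f\|_{\Sigma(l)}=\sum_{|\alpha+\beta|=l}\|x^\beta\partial_x^\alpha f\|_{\mathcal H}$, since the left side contains lower-order moments. It follows, with $C$ depending on $K$, from the support hypothesis: for $|\gamma|=k<l$ one has $\|x^\gamma f\|=\|\partial_\xi^\gamma\hat f\|\le c_0^{-(l-k)}\||\xi|^{l-k}\partial_\xi^\gamma\hat f\|$, and commuting $\xi^\mu$ past $\partial_\xi^\gamma$ reduces this to terms of the form $\|x^{\beta}\partial_x^{\alpha}f\|$ with $|\alpha|+|\beta|\le l$, which one then handles by a short induction. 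You should add a line to this effect; otherwise the proof is complete.
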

\begin{proof}
See \cite{kuroda2} or \cite{Hor}.
\end{proof}
Using the above lemma, we obtain the following lemma.
\begin{Lem}\label{no2-term}
Suppose that $\mathrm{(A)}$ be satisfied.
Let $a$ and $R$ be positive constants.
Then for any $L\in(0,a/6]$ and $\varphi_0\in\mathcal{S}\setminus\{0\}$ with $\mathrm{supp}\,\hat{\varphi}_0\subset\{\xi\in\mathbb{R}^n|\,L/2<|\xi|<L\}$,
there exists a positive constant $C$ satisfying
\begin{align}
\label{siki-Prop-no2}
\big\|W_{\varphi(s)}\left[V(s)\psi\right](x+s\xi,\xi)\big\|_{L^2(\mathbb{R}^{2n}\setminus\Gamma_{a,R})}\leq C\langle s\rangle^{-\delta}\|\psi\|_{\mathcal{H}}
\end{align}
for any $s\geq0$ and any $\psi\in \mathcal{H}$.
\end{Lem}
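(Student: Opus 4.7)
My plan is to estimate the integral
\[
W_{\varphi(s)}[V(s)\psi](x+s\xi,\xi)=\int\overline{\varphi(s,y-x-s\xi)}V(s,y)\psi(y)e^{-iy\xi}\,dy
\]
by splitting the $y$-integration into the ``centered'' region $\{|y-x-s\xi|\leq s|\xi|/4\}$ (call the corresponding piece $W_\alpha$) and its complement (the ``tail'' piece $W_\beta$), so as to exploit the short-range decay of $V$ in the first and the dispersive decay of $\varphi(s)=e^{-isH_0}\varphi_0$ in the second. It suffices to argue for $s\geq s_0(a,R,L)$; for $s\in[0,s_0]$, the $L^2$-isometry in Proposition \ref{wpt-prop}(iii) gives $\|W_{\varphi(s)}[V(s)\psi]\|\leq C\|\varphi_0\|_{\mathcal{H}}\|V\|_{L^\infty}\|\psi\|_{\mathcal{H}}$, which is bounded by $C\langle s\rangle^{-\delta}\|\psi\|_{\mathcal{H}}$ on a bounded $s$-interval.

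For $W_\alpha$: on this region, with $|x|<R$ and $s|\xi|\geq 4R$, the triangle inequality forces $|y|\geq s|\xi|/2$, so Assumption (A) yields $|V(s,y)|\leq Cs^{-\delta}$ uniformly on $\{|\xi|>a\}$. I dominate $|W_\alpha|$ by $Cs^{-\delta}H_s(x+s\xi)$, where $H_s(u):=\int|\varphi(s,y-u)||\psi(y)|\,dy$, and change variable to $u=x+s\xi$. The constraint $|x|<R$ then restricts $\xi$, for each fixed $u$, to a set of measure at most $C(R/s)^n$, while Young's convolution inequality combined with the standard stationary-phase estimate $\|\varphi(s)\|_{L^1}\leq C\langle s\rangle^{n/2}$ gives $\|H_s\|_{L^2}^2\leq C\langle s\rangle^n\|\psi\|_{\mathcal{H}}^2$. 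The $s^n$ growth and $s^{-n}$ shrinkage cancel, leaving $\|W_\alpha\|_{L^2(\mathbb{R}^{2n}\setminus\Gamma_{a,R})}\leq Cs^{-\delta}\|\psi\|_{\mathcal{H}}$.

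For $W_\beta$: since $|\xi|>a\geq 6L$, the condition $|y-x-s\xi|>s|\xi|/4$ implies $|y-x-s\xi|/s>3L/2$, so $(y-x-s\xi)/s$ lies outside $K':=\{L/3<|\eta|<3L/2\}\supset\overline{\mathrm{supp}\,\hat{\varphi}_0}$. Lemma \ref{kuroda-lem} then yields $|\varphi(s,y-x-s\xi)|\leq C\langle y-x-s\xi\rangle^{-l}\leq C(s|\xi|)^{-l}$ for any $l\in\mathbb{N}$. Cauchy--Schwarz on the $y$-integral (splitting further into $|y|\leq s|\xi|/2$ and $|y|>s|\xi|/2$, using the polynomial bound $\int_{|y|\leq M}(1+|y|)^{-2\delta}\,dy\leq C\langle M\rangle^{\max(0,n-2\delta)}$ in the first piece and $|V(s,y)|\leq Cs^{-\delta}$ in the second), followed by the $(x,\xi)$-integration with $l$ chosen large enough depending on $n$ and $\delta$, yields $\|W_\beta\|^2\leq Cs^{-2\delta}\|\psi\|_{\mathcal{H}}^2$.

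The main technical subtlety lies in $W_\alpha$: the short-range decay of $V$ alone does not tame the integration over $\{|\xi|>a\}$, since $\int_{|\xi|>a}(1+s|\xi|)^{-2\delta}\,d\xi$ diverges whenever $2\delta\leq n$. The crucial mechanism is the change of variable $u=x+s\xi$, which turns the bounded-$x$ constraint into a shrinking $\xi$-slice of volume $(R/s)^n$, exactly compensating the $s^n$ growth of $\|\varphi(s)\|_{L^1}^2$ from dispersion, and producing the correct $s^{-\delta}$ rate.
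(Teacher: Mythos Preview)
Your argument is correct, but it takes a genuinely different route from the paper's proof, and there is one spot in the tail estimate that deserves tightening.

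\medskip
\textbf{Comparison with the paper.} The paper splits at $|y-(x+s\xi)|\lessgtr as/3$ (comparable to your $s|\xi|/4$), but then, for \emph{both} pieces, it translates $x+s\xi\mapsto x$, enlarges to the full $L^2_{x,\xi}$ norm, and applies Plancherel in~$\xi$ to convert to an $L^2_{x,y}$ norm. Since $\|\varphi(s)\|_{L^2}=\|\varphi_0\|_{L^2}$, the centered piece factorizes immediately once $V$ is replaced by a cutoff $V_\rho$ bounded by $C\langle s\rangle^{-\delta}$; for the tail, Lemma~\ref{kuroda-lem} gives $\chi_{\{|y-x|>as/3\}}|\varphi(s,y-x)|\le C\langle y-x\rangle^{-l}$, half of whose decay is converted into $\langle s\rangle^{-l+(n+1)/2}$ and the other half into $x$-integrability. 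Thus the paper never needs the $L^1$ growth of $\varphi(s)$, the change of variable $u=x+s\xi$, or the shrinking-slice argument. Your approach is more hands-on and carries the extra input $\|\varphi(s)\|_{L^1}\le C\langle s\rangle^{n/2}$ (itself provable from the dispersive estimate plus Lemma~\ref{kuroda-lem}); it works, but the Plancherel route is shorter and avoids that detour entirely.

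\medskip
\textbf{On the tail piece $W_\beta$.} Your sketch is essentially right, but as written it is under-specified in the sub-piece $|y|>s|\xi|/2$. If you use only the crude bound $|\varphi|\le C(s|\xi|)^{-l}$ there, the $y$-integral $\int_{|y|>s|\xi|/2}|V(s,y)|^2\,dy$ diverges when $2\delta\le n$; if instead you keep only $|V|\le Cs^{-\delta}$ and the full $\langle y-x-s\xi\rangle^{-l}$, you get a bound independent of $\xi$, and the $\xi$-integral over $\{|\xi|>a\}$ diverges. The fix is to split the exponent: on $\beta$ one has $\langle y-x-s\xi\rangle^{-l}\le C(s|\xi|)^{-l_1}\langle y-x-s\xi\rangle^{-l_2}$ with $l_1+l_2=l$, $l_1,l_2>n/2$, so that the $l_2$-factor gives $y$-integrability and the $l_1$-factor gives $\xi$-integrability. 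With that adjustment (and $l$ chosen large, as you indicate), both sub-pieces yield $\|W_\beta\|\le Cs^{-\delta}\|\psi\|$.
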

\begin{proof}
Let $\rho=a/6$ and let $l$ be an integer satisfying $l\geq\delta+(n+1)/2$.
We put 
$
V_\rho(t,x)=\chi_0(\frac{1}{\rho \langle t\rangle}x)V(t,x),
$
where $\chi_0\in C^\infty(\mathbb{R}^n)$ satisfies $\chi_0(x)=1$ for $|x|\geq1$ and $\chi_0(x)=0$ for $|x|\leq1/2$.
Thus there exists a positive constant $C$ such that
$|V_\rho(t,x)|\leq C\langle t\rangle^{-\delta}$ for
any $t\in\mathbb{R}$ and any $x\in\mathbb{R}^n$.

If $(x,\xi)\in\mathbb{R}^{2n}\setminus\Gamma_{a,R}$, $s\geq\max\{3R/a,3\}$ and $y\in\mathbb{R}^{n}$ satisfying $|y-(x+s\xi)|\leq as/3$,
then we have
\begin{align}
\label{v-2}
|y|\geq|x+s\xi|-|y-(x+s\xi)|\geq (as-R)-\frac{as}{3}\geq\rho\langle s\rangle.
\end{align}

In this proof, we write $\{\cdots\}$ as $\{y\in\mathbb{R}^n|\cdots\}$.
Since 
$
V_\rho(s,y)=V(s,y)
$
for $|y|\geq\rho \langle s\rangle$,
we have by Plancherel's theorem and $(\ref{v-2})$
\begin{align*}
&\left\|\int_{\{|y-(x+s\xi)|\leq as/3\}}\overline{e^{-isH_0}\varphi_0(y-(x+s\xi))}V(s,y)\psi(y)e^{-i\xi y}dy\right\|_{L^2(\mathbb{R}^{2n}\setminus\Gamma_{a,R})}\\
\leq&\left\|\int_{\{|y-x|\leq as/3\}}\overline{e^{-isH_0}\varphi_0(y-x)}V_{\rho}(s,y)\psi(y)e^{-i\xi y}dy\right\|\\
\leq&\left\|\overline{e^{-isH_0}\varphi_0(y-x)}V_{\rho}(s,y)\psi(y)\right\|_{L^2(\mathbb{R}_x^n\times\mathbb{R}_y^n)}\\
\leq& C\langle s\rangle^{-\delta}\|\psi\|_{\mathcal{H}}.
\end{align*}

On the other hand,
Lemma $\ref{kuroda-lem}$ shows that
\begin{align*}
&\left\|\int_{\{|y-(x+s\xi)|> \frac{as}{3}\}}\overline{e^{-isH_0}\varphi_0(y-(x+s\xi))}V(s,y)\psi(y)e^{-i\xi y}dy\right\|\nonumber\\
=&\left\|\left(\chi_{\{|y-x|> \frac{as}{3}\}}\overline{e^{-isH_0}\varphi_0(y-x)}\right)V(s,y)\psi(y)\right\|_{L^2(\mathbb{R}_x^n\times\mathbb{R}_y^n)}\nonumber\\
\leq& C\langle s\rangle^{-l+(n+1)/2}\|\varphi_{0}\|_{\Sigma(l)}\left\|\langle y-x\rangle^{-(n+1)/2}V(s,y)\psi(y)\right\|_{L^2(\mathbb{R}_x^n\times\mathbb{R}_y^n)}\nonumber\\
=&C\langle s\rangle^{-l+(n+1)/2}\|\varphi_{0}\|_{\Sigma(l)}\left\|\langle x\rangle^{-(n+1)/2}\right\|_{\mathcal{H}}\left\|\psi\right\|_{\mathcal{H}}\nonumber\\
\leq& C\langle s\rangle^{-\delta}\|\psi\|_{\mathcal{H}}\nonumber,
\end{align*}
since $\mathrm{supp}\,\hat{\varphi}_0\subset\{\xi\in\mathbb{R}^n|\,0<|\xi|<a/6\}$.

Hence $(\ref{siki-Prop-no2})$ is obtained.
\end{proof}

We shall use the following lemma in Section 5.
Let $\Gamma_a^{b,\pm}=\{(x,\xi)\in\mathbb{R}^{2n}|\,|\xi|\geq a, |x|\geq b \mbox{ and }\pm x\cdot\xi\geq0\}$.
\begin{Lem}\label{no2-term2}
Suppose that $\mathrm{(A)}$ be satisfied.
Let $a$ and $b$ be positive constants.
Then for any $L\in(0,a/6]$ and $\varphi_0\in\mathcal{S}\setminus\{0\}$ with $\mathrm{supp}\,\hat{\varphi}_0\subset\{\xi\in\mathbb{R}^n|\,L/2<|\xi|<L\}$,
there exists a positive constant $C$ independent of $b$ satisfying
\begin{align*}
\big\|W_{\varphi(s)}\left[V(s)\psi\right](x\pm s\xi,\xi)\big\|_{L^2(\Gamma_a^{b,\pm})}\leq C\langle s+b\rangle^{-\delta}\|\psi\|_{\mathcal{H}}
\end{align*}
for any $s\geq0$ and any $\psi\in \mathcal{H}$.
\end{Lem}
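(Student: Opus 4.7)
The proof is a line-by-line adaptation of the proof of Lemma~\ref{no2-term} in which $\langle s\rangle$ is systematically replaced by $\langle s+b\rangle$. The new ingredient is a geometric inequality that exploits the sign condition $\pm x\cdot\xi\ge 0$ appearing in the definition of $\Gamma_a^{b,\pm}$: on $\Gamma_a^{b,\pm}$ the expansion
\[
|x\pm s\xi|^2=|x|^2\pm 2s(x\cdot\xi)+s^2|\xi|^2\ge b^2+a^2s^2
\]
has nonnegative cross term, giving $|x\pm s\xi|\ge(b+as)/2$ for \emph{every} $s\ge 0$, with a constant depending only on $a$. This replaces the role of the inequality (v-2) from Lemma~\ref{no2-term} and, crucially, requires no lower bound on $s$ and produces a constant independent of $b$.

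Treating the $+$ case (the $-$ case is identical under $\xi\to-\xi$), I would introduce the truncation $V_\rho(s,y)=\chi_0(6y/(as+b))V(s,y)$, chosen so that $V_\rho=V$ on $\{|y|\ge(as+b)/6\}$ and $|V_\rho(s,y)|\le C\langle s+b\rangle^{-\delta}$ uniformly in $s,b,y$ (the latter uses $1+(as+b)/12\ge c_a\langle s+b\rangle$). As in Lemma~\ref{no2-term} the wave packet transform is split at $|y-(x+s\xi)|\le(as+b)/3$. On $\Gamma_a^{b,+}$ and the near region, $|y|\ge|x+s\xi|-(as+b)/3\ge(as+b)/6$, so $V$ coincides with $V_\rho$ inside the integral; the shift $\tilde x=x+s\xi$, Plancherel in $\xi$, and Fubini then yield
\[
\left\|\int_{|y-(x+s\xi)|\le(as+b)/3}\overline{\varphi(s,y-(x+s\xi))}V(s,y)\psi(y)e^{-iy\xi}\,dy\right\|_{L^2(\Gamma_a^{b,+})}\le C\|\varphi_0\|_{\mathcal{H}}\|V_\rho(s)\psi\|_{\mathcal{H}}\le C\langle s+b\rangle^{-\delta}\|\psi\|_{\mathcal{H}},
\]
exactly as in Lemma~\ref{no2-term}.

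For the far integral $|y-(x+s\xi)|>(as+b)/3\ge as/3$, pick an open $K'\supset\mathrm{supp}\,\hat\varphi_0$ with $K'\subset\{|\eta|<a/4\}$, which is possible since $L\le a/6$. For $s>0$, $(y-(x+s\xi))/s\notin K'$ on the far region, so Lemma~\ref{kuroda-lem} with any integer $l\ge\delta+(n+1)/2$ yields $|e^{-isH_0}\varphi_0(y-(x+s\xi))|\le C\langle y-(x+s\xi)\rangle^{-l}\|\varphi_0\|_{\Sigma(l)}$; for $s=0$ the same decay follows from $\varphi_0\in\mathcal{S}$. Using $\langle y-(x+s\xi)\rangle\ge c_a'\langle s+b\rangle$ on the far region to factor
\[
\langle y-(x+s\xi)\rangle^{-l}\le C\langle s+b\rangle^{-\delta}\langle y-(x+s\xi)\rangle^{-(n+1)/2},
\]
the Plancherel--Fubini computation of Lemma~\ref{no2-term} then gives the matching bound $C\langle s+b\rangle^{-\delta}\|\psi\|_{\mathcal{H}}$ for this piece. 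Summing the two contributions finishes the proof.

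The only nontrivial bookkeeping I anticipate is keeping every constant independent of $b$; this reduces to the two elementary inequalities $b+as\ge\min(1,a)(s+b)$ and $1+(as+b)/3\ge(\min(1,a)/3)\langle s+b\rangle$, both of which hold with constants depending only on $a$. With these in hand, the argument is a direct transcription of that for Lemma~\ref{no2-term}.
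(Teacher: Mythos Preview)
Your proposal is correct and follows essentially the same route as the paper. The paper's own proof is a two-line sketch: it invokes the argument of Lemma~\ref{no2-term} verbatim, with the single replacement of the geometric estimate (\ref{v-2}) by
\[
|y|\ge |x\pm s\xi|-|y-(x\pm s\xi)|\ge \frac{1}{\sqrt2}a(s+b)-\frac13 a(s+b)\ge \frac16 a\langle s+b\rangle
\]
on $\Gamma_a^{b,\pm}$ for $|y-(x\pm s\xi)|\le a(s+b)/3$ and $s\ge(\sqrt2+1)/3$, using exactly your observation that the cross term $\pm 2s\,x\cdot\xi$ is nonnegative. Your write-up is a faithful expansion of this sketch; the only cosmetic difference is that you carry $(as+b)$ rather than $a\langle s+b\rangle$ through the truncation and convert at the end via the elementary inequalities you list, which lets you avoid the paper's lower bound $s\ge(\sqrt2+1)/3$ and treat all $s\ge0$ uniformly.
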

\begin{proof}
The proof is obtained by the same argument as in the proof of Lemma \ref{no2-term} and the estimate that
\begin{align*}
|y|&\geq|x\pm s\xi|-|y-(x\pm s\xi)|\\
&\geq \sqrt{|x|^2\pm2x\cdot\xi+|\xi|^2}-\frac13a(s+b)\\
&\geq \frac1{\sqrt2}a(s+b)-\frac13a(s+b)\geq\frac{1}{6}a\langle s+b\rangle,
\end{align*}
if $(x,\xi)\in\Gamma_a^{b,\pm}$, $s\geq(\sqrt2+1)/3$ and $y\in\mathbb{R}^{n}$ satisfying $|y-(x\pm s\xi)|\leq a(s+b)/3$.\\
\end{proof}

Now we give an alternative proof of the existence of the wave operators $W_\pm(\tau)$ by using wave packet transform.
\begin{Prop}[D. R. Yafaev \cite{Yafaev}]\label{PropExis}
Suppose that $\mathrm{(A)}$ and $\mathrm{(B)}$ be satisfied.
Then the wave operators
$W_{\pm}(\tau)$
exist for any $\tau\in\mathbb{R}$.
\end{Prop}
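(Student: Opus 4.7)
The plan is to adapt Cook--Kuroda's method through the wave packet representation (\ref{o-m}). Since each $U(\tau,t)e^{-i(t-\tau)H_0}$ is unitary, it suffices to produce the strong limit as $t\to\pm\infty$ on a dense subspace $\mathcal{D}\subset\mathcal{H}$ and then extend by isometric continuity. I would fix $\varphi_0\in\mathcal{S}\setminus\{0\}$ with $\mathrm{supp}\,\hat\varphi_0\subset\{L/2<|\xi|<L\}$, $L\leq a/6$, so that Lemma \ref{no2-term} applies, and take $\mathcal{D}$ to be the $\mathcal{S}$-functions whose Fourier transform is compactly supported in a shell $\{c_1<|\xi|<c_2\}$ with $c_1>a+L$; this guarantees both density in $\mathcal{H}$ and, via a convolution computation, that $W_{\varphi_0}\psi$ vanishes on $\{|\xi|\leq a\}$.

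The first step is to generalize (\ref{o-m}) to arbitrary initial time $\tau$, which by the same characteristics argument as in Section 2 yields
\begin{equation*}
W_{\varphi_0}\bigl[e^{i(t_2-\tau)H_0}U(t_2,\tau)\psi-e^{i(t_1-\tau)H_0}U(t_1,\tau)\psi\bigr](x,\xi)
=-i\int_{t_1}^{t_2}e^{i(s-\tau)|\xi|^2/2}W_{\varphi(s-\tau)}[V(s)U(s,\tau)\psi](x+(s-\tau)\xi,\xi)\,ds.
\end{equation*}
By Proposition \ref{wpt-prop}(iii) this converts the Cauchy problem for $e^{i(t-\tau)H_0}U(t,\tau)\psi$ in $\mathcal{H}$ into an $L^2_{x,\xi}$-convergence problem for the right-hand side; the strong convergence of the dual family $U(\tau,t)e^{-i(t-\tau)H_0}=[e^{i(t-\tau)H_0}U(t,\tau)]^{-1}$ then follows by the unitarity. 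Splitting the $L^2_{x,\xi}$-norm via $\chi_{\Gamma_{a,R}}+\chi_{\Gamma_{a,R}^c}$ and applying Minkowski's inequality, the $\Gamma_{a,R}^c$-contribution is immediate from Lemma \ref{no2-term} with the integrable bound $\int C\langle s-\tau\rangle^{-\delta}\|\psi\|\,ds$, which vanishes as $t_1,t_2\to\pm\infty$ since $\delta>1$.

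The principal obstacle is the $\Gamma_{a,R}$-contribution, where Lemma \ref{no2-term} fails and the naive $L^2$-bound is only uniform-in-$s$. For this I would exploit the structure of $\psi\in\mathcal{D}$: by the non-stationary phase estimate Lemma \ref{kuroda-lem}, $e^{-i(s-\tau)H_0}\psi$ is essentially supported in the outgoing strip $\{|y|\in(s-\tau)[c_1,c_2]\}$ where $V(s,y)$ decays like $\langle s-\tau\rangle^{-\delta}$, with rapid decay outside. Since the Fourier condition $c_1>a+L$ rules out the $\{|\xi|\leq a\}$-part of $\Gamma_{a,R}$, it remains to control the tail $\{|x|\geq R\}$ of $\Gamma_{a,R}$, on which I would rerun the cutoff/Plancherel argument of Lemma \ref{no2-term} along the characteristic $(x+(s-\tau)\xi,\xi)$ (in the same spirit as Lemma \ref{no2-term2}) to obtain the analogous bound $C\langle s-\tau\rangle^{-\delta}\|\psi\|_{\Sigma(l)}$. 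Once both pieces are integrable, the Cook integral converges absolutely, the Cauchy property holds on $\mathcal{D}$, and the operators $W_\pm(\tau)$ extend to all of $\mathcal{H}$ by density.
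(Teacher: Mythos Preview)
Your proposal has two genuine gaps, both rooted in a confusion between $W_+(\tau)$ and its would-be inverse. The identity you extract from (\ref{o-m}) controls $e^{i(t-\tau)H_0}U(t,\tau)\psi$, i.e.\ the family whose strong limit is the \emph{inverse} wave operator, not $W_+(\tau)=\mbox{s-}\lim U(\tau,t)e^{-i(t-\tau)H_0}$. Your claim that ``the strong convergence of the dual family \ldots\ follows by the unitarity'' is false: if unitaries $A_t$ converge strongly to an isometry $A$ that is not onto, then $A_t^{-1}=A_t^*$ cannot converge strongly (any strong limit would have to equal $A^*$, yet $\|A_t^*\phi\|=\|\phi\|\neq 0=\|A^*\phi\|$ for $\phi\perp\mathcal R(A)$). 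Surjectivity of the limit is exactly the completeness statement you have not proved, so this step is circular.

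Even for the inverse direction, your treatment of the $\Gamma_{a,R}$-piece does not go through. The integrand in (\ref{o-m}) is $W_{\varphi(s-\tau)}[V(s)U(s,\tau)\psi]$, where $U(s,\tau)$ is the \emph{full} propagator; it neither preserves the Fourier support of $\psi$ nor obeys the free non-stationary phase bound of Lemma \ref{kuroda-lem}. Hence the shell condition $c_1>a+L$ does \emph{not} make the integrand vanish on $\{|\xi|\le a\}$, and your proposed rerun of Lemma \ref{no2-term} on $\{|x|\ge R\}$ has nothing to feed on. The paper avoids both problems simultaneously by a duality trick: it fixes $u_0$ with $W_\Phi u_0\in C_0^\infty(\mathbb{R}^{2n}\setminus\Gamma_{a,R})$, writes $(U(0,t)e^{-itH_0}u_0,\psi)_{\mathcal H}=(\varphi_0,\Phi)_{\mathcal H}^{-1}\,(W_\Phi u_0,\,W_{\varphi_0}[e^{itH_0}U(t,0)\psi])$ for \emph{arbitrary} $\psi\in\mathcal H$, and applies (\ref{o-m}) only inside the second slot. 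Because $W_\Phi u_0$ already localizes the pairing to $\Gamma_{a,R}^c$, Lemma \ref{no2-term} alone suffices; taking the supremum over $\|\psi\|_{\mathcal H}=1$ then yields the Cauchy property of $U(0,t)e^{-itH_0}u_0$ directly --- the correct family.
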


\begin{proof}
Substituting $V(t-\tau,x)$ for $V(t,x)$, it suffices to show the case $\tau=0$.
We prove the existence in the case $t\rightarrow+\infty$ only.
Let $\Phi\in\mathcal{S}_{scat}$ and $u_0\in \mathcal{H}$.

First, we show the existence of $W_+(0)u_0$ for $W_\Phi u_0\in C_0^\infty(\mathbb{R}^{2n}\setminus\{\xi=0\})$.
Let $a$ and $R$ be positive constants satisfying
\begin{align}
\label{siki-suppW}
\mathrm{supp}\,W_\Phi u_0\subset\mathbb{R}^{2n}\setminus\Gamma_{a,R}
\end{align}
and $\varphi_0\in\mathcal{S}\setminus\{0\}$ satisfying
\begin{align}
\label{siki-non-orth}
\mathrm{supp}\,\hat{\varphi}_0\subset\left\{\xi\in\mathbb{R}^n\Big|\,\frac{L}2<|\xi|<L\right\}\mbox{ with }0<L\leq \frac a6\quad\mbox{and}\quad |( \Phi,\varphi_0)_{\mathcal{H}}|>0.
\end{align}
By $(\ref{o-m})$ and $(\ref{siki-wpt-inv})$, we have for $t\geq0$
\begin{align}
\label{siki-exis-dual}
(U(0,t)&e^{-itH_0}u_0,\psi)_{\mathcal{H}}\nonumber\\
&=\left( u_0,e^{itH_0}U(t,0)\psi\right)_{\mathcal{H}}\nonumber\\
&=\frac{1}{\left(\varphi_{0},\Phi\right)_{\mathcal{H}}}\left( W_\Phi u_0,W_{\varphi_{0}}[e^{itH_0}U(t,0)\psi]\right)\\
&=\frac{1}{(\varphi_{0},\Phi)_{\mathcal{H}}}\left(W_\Phi u_0,W_{\varphi_{0}}\psi-i\int_0^tW_{\varphi(s)}\left[V(s)U(s,0)\psi\right](x+s\xi,\xi)ds\right).\nonumber
\end{align}
Lemma \ref{no2-term} and $(\ref{siki-suppW})$ show that for $t'\geq t>0$
\begin{align*}
&\left|\left( W_\Phi u_0,-i\int_{t}^{t'}W_{\varphi(s)}\left[V(s)U(s,0)\psi\right](x+s\xi,\xi)ds\right)\right|\\
&\hspace{10mm}\leq\|W_\Phi u_0\| \int_t^{t'}\|W_{\varphi(s)}\left[V(s)U(s,0)\psi\right](x+s\xi,\xi)\|_{L^2(\mathbb{R}^{2n}\setminus\Gamma_{a,R})} ds\\
&\hspace{10mm}\leq \|u_0\|_{\mathcal{H}} \int_t^{t'}C\langle s\rangle^{-\delta}\|U(s,0)\psi\|_{\mathcal{H}} ds\\
&\hspace{10mm}\leq C \langle t\rangle^{1-\delta}\|u_0\|_{\mathcal{H}}\|\psi\|_{\mathcal{H}}.
\end{align*}
The above inequality and $(\ref{siki-exis-dual})$ imply the existence of $W_+(0)u_0$ for $u_0\in W_\Phi^{-1}(C_0^\infty(\mathbb{R}^{2n}\setminus\{\xi=0\}))$.

For $u_0\in \mathcal{H}$, the existence of $W_+(0)u_0$ follows from the fact that $W_\Phi^{-1}(C_0^\infty(\mathbb{R}^{2n}\setminus\{\xi=0\}))$ is dense in $\mathcal{H}$.
Indeed, let $\varepsilon$ be a fixed positive number.
Since $C_0^\infty(\mathbb{R}^{2n}\setminus\{\xi=0\})$ is dense in $L^2(\mathbb{R}^{2n})$, there exists $\omega\in C_0^\infty(\mathbb{R}^{2n}\setminus\{\xi=0\})$ satisfying
$
\|W_\Phi u_0-\omega\|\leq\varepsilon.
$
Putting $\tilde{u_0}=W_\Phi^{-1}\omega$, we have
$
\|U(0,t')e^{-it'H_0}u_0-U(0,t)e^{-itH_0}u_0\|_{\mathcal{H}}
\leq\|U(0,t')e^{-it'H_0}\tilde{u_0}-U(0,t)e^{-itH_0}\tilde{u_0}\|_{\mathcal{H}}
+2\varepsilon
$
for any $t'\geq t>0$.
$(U(0,t)e^{-itH_0}\tilde{u_0})$ is a Cauchy sequence with respect to $t$ as $t\to\infty$ in $\mathcal{H}$, so is $(U(0,t)e^{-itH_0}u_0)$.
\end{proof}
%
%
\section{Characterization of the range of the wave operators}
In this section, we characterize the ranges of the wave operators by the wave packet transform.

\begin{Prop}\label{PropComp}
Suppose that $\mathrm{(A)}$ and $\mathrm{(B)}$ be satisfied.
Then we have
\[
\mathcal{R}(W_{\pm}(\tau))=D_{scat}^{\pm,\Phi}(\tau)
\]
for any $\Phi\in\mathcal{S}_{scat}$.
\end{Prop}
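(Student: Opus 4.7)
The plan is to reduce to $\tau=0$ by substituting $V(t-\tau,x)$ for $V(t,x)$ and to treat only the $+\infty$ case. The central identity, derived from $(\ref{siki-psi})$ with $\varphi_0=\Phi$ and $\psi=e^{itH_0}g$, is
\begin{equation*}
W_\Phi[e^{itH_0}g](y,\xi)=e^{\frac{it}{2}|\xi|^2}\,W_{\Phi(t)}[g](y+t\xi,\xi),
\end{equation*}
whose unimodular phase preserves pointwise absolute values; this links the two wave packet transforms appearing in the definitions of $\mathcal{R}(W_+(0))$ and $D_{scat}^{+,\Phi}(0)$.

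For $\mathcal{R}(W_+(0))\subseteq D_{scat}^{+,\Phi}(0)$, fix $\psi\in W_\Phi^{-1}(C_0^\infty(\mathbb{R}^{2n}\setminus\{\xi=0\}))$, which is dense in $\mathcal{H}$. Since $e^{itH_0}U(t,0)W_+(0)\psi\to\psi$ in $\mathcal{H}$ and $W_\Phi$ is $\|\Phi\|$ times an isometry, applying the identity to $g=U(t,0)W_+(0)\psi$ yields
\begin{equation*}
\bigl\|\chi_A\,W_{\Phi(t)}[U(t,0)W_+(0)\psi](y+t\xi,\xi)\bigr\|_{L^2}\longrightarrow\bigl\|\chi_A\,W_\Phi\psi\bigr\|_{L^2}
\end{equation*}
for every measurable $A\subset\mathbb{R}^{2n}$. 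Choosing $a,R>0$ so that $\mathrm{supp}\,W_\Phi\psi\subset\mathbb{R}^{2n}\setminus\Gamma_{a,R}$ makes the right side vanish at $A=\Gamma_{a,R}$, placing $W_+(0)\psi$ in $\tilde D_{scat}^{+,\Phi}(0)$; density of such $\psi$ and closedness of $D_{scat}^{+,\Phi}(0)$ then give the inclusion.

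For $D_{scat}^{+,\Phi}(0)\subseteq\mathcal{R}(W_+(0))$, given $f\in\tilde D_{scat}^{+,\Phi}(0)$ it suffices to show that $\psi_t:=e^{itH_0}U(t,0)f$ is Cauchy in $\mathcal{H}$, for then its limit $\psi$ satisfies $W_+(0)\psi=f$ by virtue of the identity $U(0,t)e^{-itH_0}\psi_t\equiv f$ and the unitarity of $U(0,t)e^{-itH_0}$. Using the central identity together with the $W_\Phi$ isometry,
\begin{equation*}
\|\Phi\|^2\|\psi_t-\psi_{t'}\|_\mathcal{H}^2=\bigl\|e^{\frac{it}{2}|\xi|^2}F(t)-e^{\frac{it'}{2}|\xi|^2}F(t')\bigr\|_{L^2}^2,
\end{equation*}
where $F(s,y,\xi):=W_{\Phi(s)}[U(s,0)f](y+s\xi,\xi)$. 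I would split this over $\Gamma_{a,R}$ and $\Gamma_{a,R}^c$: the first piece is dominated by $2\|\chi_{\Gamma_{a,R}}F(t)\|^2+2\|\chi_{\Gamma_{a,R}}F(t')\|^2$, which tends to zero by hypothesis; for the second, $(\ref{o-m})$ rewrites $W_\Phi\psi_t-W_\Phi\psi_{t'}$ as $-i\int_{t'}^te^{is|\xi|^2/2}W_{\Phi(s)}[V(s)U(s,0)f](x+s\xi,\xi)\,ds$, and Minkowski's inequality reduces matters to $\int_{t'}^t\|\chi_{\mathbb{R}^{2n}\setminus\Gamma_{a,R}}W_{\Phi(s)}[V(s)U(s,0)f](x+s\xi,\xi)\|_{L^2}\,ds$.

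The main obstacle is controlling this last integrand: Lemma $\ref{no2-term}$ furnishes precisely the needed bound $C\langle s\rangle^{-\delta}\|f\|_\mathcal{H}$, but only for $\hat\varphi_0$ supported in an annulus bounded away from the origin, whereas $\hat\Phi(0)\neq 0$. I would resolve this by writing $\Phi=\Phi_L+\Phi_H$ via a smooth frequency cutoff, with $\mathrm{supp}\,\hat\Phi_L\subset\{|\eta|<L\}$ for a small $L<a/6$ and $\mathrm{supp}\,\hat\Phi_H\subset\{|\eta|>L/2\}$; a dyadic decomposition reduces $\Phi_H$ to pieces supported in annuli to which Lemma $\ref{no2-term}$ applies directly, while for $\Phi_L$ the near-region step of that lemma's proof carries over verbatim (it uses only the decay of $V$ and Plancherel in $\xi$) and the far-region step follows from a direct integration by parts in $\eta$ on the Fourier side, using that on $|y-x|>as/3$ the gradient of the phase $(y-x)\cdot\eta-s|\eta|^2/2$ has magnitude at least $(a/6)s$ when $\eta\in\mathrm{supp}\,\hat\Phi_L$, yielding sufficient pointwise decay of $e^{-isH_0}\Phi_L(y-x)$ to close the estimate.
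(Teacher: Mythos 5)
Your first inclusion ($\mathcal{R}(W_+(0))\subset D_{scat}^{+,\Phi}(0)$) and the overall Cauchy-sequence strategy for the converse are sound, but the decisive step of the second inclusion has a genuine gap: you need $\big\|\chi_{\Gamma_{a,R}^c}W_{\Phi(s)}[V(s)U(s,0)f](x+s\xi,\xi)\big\|\leq C\langle s\rangle^{-\delta}\|f\|_{\mathcal{H}}$ with the window $\Phi$ itself, and your frequency decomposition does not deliver it. Lemma \ref{no2-term} requires $\mathrm{supp}\,\hat{\varphi}_0\subset\{L/2<|\eta|<L\}$ with $L\leq a/6$, so it applies to none of the dyadic pieces of $\Phi_H$ at frequencies above $a/6$; worse, for pieces at frequencies $\gtrsim a$ the desired bound is simply false. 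For $(x,\xi)\in\Gamma_{a,R}^c$ the evolved window $y\mapsto e^{-isH_0}\Phi_j\big(y-(x+s\xi)\big)$ has, by the standard dispersive asymptotics, modulus $\approx (2\pi s)^{-n/2}\big|\hat{\Phi}_j\big((y-x-s\xi)/s\big)\big|$, and for $y$ near the origin the argument is $\approx-\xi$ with $|\xi|>a$, which lies in the support of $\hat{\Phi}_j$ for those pieces; hence the window re-enters the region where $|V(s,y)|\sim1$, and testing against a fixed bump $g$ at the origin shows the left-hand side is of order $s^{-n/2}$, not $s^{-\delta}$. Since $\hat{\Phi}$ is only Schwartz with $\hat{\Phi}(0)\neq0$ (e.g.\ a Gaussian, whose Fourier transform vanishes nowhere), these components carry a fixed nonzero constant, and $s^{-n/2}$ is not integrable for $n\leq2$, so your Minkowski/Duhamel estimate on $\Gamma^c$ does not close; note that $a$ is dictated by the hypothesis $f\in\tilde{D}_{scat}^{+,\Phi}(0)$ and cannot be enlarged. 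Your non-stationary-phase treatment of $\Phi_L$ is fine, but it only covers the harmless low-frequency part.

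The paper avoids this obstacle by never using $\Phi$ as the analyzing window in the Duhamel term. It argues by duality: for arbitrary $\psi$ it expresses $\big(e^{itH_0}U(t,0)u_0-e^{it'H_0}U(t',0)u_0,\psi\big)_{\mathcal{H}}$ through the two-window identity $(W_\Phi f,W_\varphi g)=(\varphi,\Phi)_{\mathcal{H}}(f,g)_{\mathcal{H}}$ of Proposition \ref{wpt-prop} (iii), with an auxiliary $\varphi_0$ as in $(\ref{siki-non-orth})$: its annular low-frequency support makes Lemma \ref{no2-term} applicable, while $(\varphi_0,\Phi)_{\mathcal{H}}\neq0$ is possible precisely because $\hat{\Phi}(0)\neq0$ and $L\leq a/6$ may be taken small. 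Splitting the factor $W_{\Phi(t)}[U(t,0)u_0]$ by $\chi_{\Gamma}+\chi_{\Gamma^c}$, the $\Gamma$-part is killed by the $D_{scat}$ hypothesis (which is formulated with $\Phi$), and $\chi_{\Gamma^c}$ is moved onto the other factor, where $(\ref{siki-psi2})$ and Lemma \ref{no2-term} are applied with the window $\varphi$. This decoupling of the ``hypothesis window'' $\Phi$ from the ``estimate window'' $\varphi_0$ is exactly what your single-window argument lacks, and without it (or some substitute argument valid for all $n\geq1$) the second inclusion is not proved.
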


\begin{proof}
It suffices to prove that
$
\mathcal{R}(W_+(0))=D_{scat}^{+,\Phi}(0),
$
since the proposition can be proved in the same way in the other cases.

Let $\Phi\in\mathcal{S}_{scat}$ and  $\varepsilon$ be a fixed positive number.
Until the end of the proof,
we abbreviate $W_+=W_+(0)$,
$D_{scat}^+=D_{scat}^{+,\Phi}(0)$ and $\tilde{D}_{scat}^+=\tilde{D}_{scat}^{+,\Phi}(0)$.

We first prove that
$\mathcal{R}(W_+)\subset D_{scat}^+.
$
Let $f\in\mathcal{R}(W_+)$ and
we fix $g\in W_{\Phi}^{-1}(C_0^\infty(\mathbb{R}^{2n}\setminus\{\xi=0\}))$ satisfying
\begin{align}
\label{siki-d-kinzi}
\|f-W_+g\|_{\mathcal{H}}\leq\varepsilon.
\end{align}
Then there exist positive constants $a$ and $R$ such that
$\chi_{\Gamma_{a,R}}(x,\xi)W_{\Phi}g(x,\xi)=0$ for all $(x,\xi)\in\mathbb{R}^{2n}$.
By $(\ref{siki-psi})$ and the definition of $W_+$, we obtain
\begin{align*}
&\limsup_{t\rightarrow\infty}\|\chi_{\Gamma_{a,R}}W_{\Phi(t)}[U(t,0)W_+g](x+t\xi,\xi)\|\\
&\leq\lim_{t\rightarrow\infty}\left(\|\chi_{\Gamma_{a,R}}W_{\Phi(t)}[e^{-itH_0}g](x+t\xi,\xi)\|
+\|W_{\Phi(t)}[U(t,0)W_+g-e^{-itH_0}g]\|\right)\\
&=\|\chi_{\Gamma_{a,R}}W_{\Phi}g\|+\lim_{t\rightarrow\infty}
\|U(t,0)W_+g-e^{-itH_0}g\|\\
&=0.
\end{align*}
Hence we have $W_+g\in\tilde{D}_{scat}^+$,
which and $(\ref{siki-d-kinzi})$ show $\mathcal{R}(W_+)\subset D_{scat}^+$.

Next we prove
$
\mathcal{R}(W_+)\supset D_{scat}^+.
$
If the inverse wave operator
\begin{align}
\label{siki-inv-wo}
W_{+}^{-1}u_0=\mbox{s-}\hspace{-2mm}\lim_{t\rightarrow+\infty}e^{itH_0}U(t,0)u_0
\end{align}
exists for any $u_0\in D_{scat}^+$,
we obtain $\mathcal{R}(W_+)\supset D_{scat}^+$.
It suffices to prove that $(\ref{siki-inv-wo})$ exists for $u_0\in\tilde{D}_{scat}^{+}$,
since $D_{scat}^{\pm}$ is the closure of $\tilde{D}_{scat}^{\pm}$.

Let $u_0\in\tilde{D}_{scat}^{+}$ and let $a$ and $R$ be positive constants satisfying
\begin{align}
\label{siki-gc}
\lim_{t\rightarrow\infty}\|\chi_{\Gamma_{a,R}}W_{\Phi(t)}[U(t,0)u_0](x+t\xi,\xi)\|=0.
\end{align}
Until the end of the proof,
we abbreviate $\Gamma=\Gamma_{a,R}$ and $\Gamma^c=\mathbb{R}^{2n}\setminus\Gamma$.
Take $\varphi_0\in\mathcal{S}\setminus\{0\}$ satisfying $(\ref{siki-non-orth})$.
By $(\ref{siki-wpt-inv})$, we have for $t'\geq t>0$
\begin{align*}
\left( e^{itH_0}U(t,0)u_0,\psi\right)_{\mathcal{H}}
&=\left( U(t,0)u_0,e^{-itH_0}\psi\right)_{\mathcal{H}}\\
&=\frac{1}{\left(\varphi(t),\Phi(t)\right)_{\mathcal{H}}}\left( W_{\Phi(t)}[U(t,0)u_0],W_{\varphi(t)}[e^{-itH_0}\psi]\right)\\
&=\frac{1}{\left(\varphi_{0},\Phi\right)_{\mathcal{H}}}\left( \chi_{\Gamma}(x-t\xi,\xi)W_{\Phi(t)}[U(t,0)u_0],W_{\varphi(t)}[e^{-itH_0}\psi]\right)\\
&+\frac{1}{\left(\varphi_{0},\Phi\right)_{\mathcal{H}}}\left( \chi_{\Gamma^c}(x-t\xi,\xi)W_{\Phi(t)}[U(t,0)u_0],W_{\varphi(t)}[e^{-itH_0}\psi]\right)
\end{align*}
and
\begin{align*}
( &e^{it'H_0}U(t',0)u_0,\psi)_{\mathcal{H}}\\
&=\left( U(t,0)u_0,U(t,t')e^{-it'H_0}\psi\right)_{\mathcal{H}}\nonumber\\
&=\frac{1}{\left(\varphi_{0},\Phi\right)_{\mathcal{H}}}\left( (\chi_{\Gamma}+\chi_{\Gamma^c})(x-t\xi,\xi)W_{\Phi(t)}[U(t,0)u_0],W_{\varphi(t)}[U(t,t')e^{-it'H_0}\psi]\right).
\end{align*}
Taking the difference between the above equalities, we have
\begin{align}
\label{siki-decom}
&\left(\varphi_{0},\Phi\right)_{\mathcal{H}}\Big(e^{itH_0}U(t,0)u_0-e^{it'H_0}U(t',0)u_0,\psi\Big)_{\mathcal{H}}\nonumber\\
&=
\left( \chi_{\Gamma}(x-t\xi,\xi)W_{\Phi(t)}[U(t,0)u_0],W_{\varphi(t)}[e^{-itH_0}\psi-U(t,t')e^{-it'H_0}\psi]\right)\\
&+
\left(W_{\Phi(t)}[U(t,0)u_0],\chi_{\Gamma^c}(x-t\xi,\xi)\left(W_{\varphi(t)}[e^{-itH_0}\psi-U(t,t')e^{-it'H_0}\psi]\right)\right).\nonumber
\end{align}
Using $(\ref{siki-gc})$, we obtain
\begin{align}
\label{siki-G}
&\sup_{\|\psi\|_{\mathcal{H}}=1}|(\mbox{the } \mbox{first term of the right hand side in } (\ref{siki-decom}))|\nonumber\\
&\leq\sup_{\|\psi\|_{\mathcal{H}}=1}\|\chi_{\Gamma}(x-t\xi,\xi)W_{\Phi(t)}[U(t,0)u_0]\|\|W_{\varphi(t)}[e^{-itH_0}\psi-U(t,t')e^{-it'H_0}\psi]\|\nonumber\\
&\leq2\|\varphi_0\|_{\mathcal{H}}\|\chi_{\Gamma}W_{\Phi(t)}[U(t,0)u_0](x+t\xi,\xi)\|.
\end{align}
We have by $(\ref{siki-psi})$ and $(\ref{siki-psi2})$
\begin{align*}
W_{\varphi(t)}[&e^{-itH_0}\psi-U(t,t')e^{-it'H_0}\psi](x,\xi)\\
&=-i\int_t^{t'} e^{-i\frac{1}{2}(t-s) |\xi|^2}
W_{\varphi(s)}\left[V(s)U(s,t')e^{-it'H_0}\psi\right](x-(t-s)\xi,\xi)ds,
\end{align*}
which shows by Lemma \ref{no2-term}
\begin{align}
\label{siki-Gc}
&|(\mbox{the } \mbox{second term of the right hand side in } (\ref{siki-decom}))|\nonumber\\
=&\Bigg|\Bigg( W_{\Phi(t)}[U(t,0)u_0](x+t\xi,\xi),\chi_{\Gamma^c}(x,\xi)\int_t^{t'} e^{-i\frac{1}{2}(t-s) |\xi|^2}\nonumber\\
&\hspace{47mm}\times W_{\varphi(s)}\left[V(s)U(s,t')e^{-it'H_0}\psi\right](x+s\xi,\xi)ds\Bigg)\Bigg|\nonumber\\
\leq& C\|u_0\|_{\mathcal{H}}\int_t^{t'}\left\|W_{\varphi(s)}\left[V(s)U(s,t')e^{-it'H_0}\psi\right](x+s\xi,\xi)\right\|_{L^2(\Gamma^c)}ds\\
\leq& C\|u_0\|_{\mathcal{H}}\langle t\rangle^{-\delta+1}\|\psi\|_{\mathcal{H}}.\nonumber
\end{align}
$(\ref{siki-inv-wo})$ follows from $(\ref{siki-G})$ and $(\ref{siki-Gc})$.
\end{proof}
Theorem \ref{main-theorem} is obtained by Proposition \ref{PropExis} and \ref{PropComp}.

%
\section{Proof of Theorem \ref{indp-theorem} by our characterization}
In this section, we give an alternative proof of Theorem \ref{indp-theorem} by using our characterization.

\begin{proof}
We shall only prove for $\tau=0$ and for the case that $t\rightarrow+\infty$.
We fix $\Phi\in\mathcal{S}_{scat}$.
We use the same notations $D_{scat}^+=D_{scat}^{+,\Phi}(0)$ and $\tilde{D}_{scat}^+=\tilde{D}_{scat}^{+,\Phi}(0)$
as in the proof of Proposition \ref{PropComp}.

Firstly, we prove $\mathcal{H}_{p}(H)^{\perp}\supset D_{scat}^+$.
For $u_0\in \tilde{D}_{scat}^{+}$, we have
\begin{align}
\label{siki-aa}
\lim_{t\rightarrow\infty}\|\chi_{\Gamma_{a,R}}W_{\Phi(t)}[e^{-itH}u_0](x+t\xi,\xi)\|=0
\end{align}
for some positive constants $a$ and $R$.
On the other hand, for $\omega\in\mathcal{H}_{p}(H)$, 
we have $\omega=\sum_{n=1}^\infty a_n\omega_n$ where $a_n\in\mathbb{C}$ with $\sum_{n=1}^{\infty}|a_n|^2<\infty$
and $\omega_n$ is the normalized eigenfunction of $H$ corresponding to the eigenvalue $\lambda_n$.
Then we see that
$e^{-itH}\omega=\sum_{n=1}^\infty a_ne^{-it\lambda_n}\omega_n$ for any $t\in\mathbb{R}$.
Taking $\varphi_0\in \mathcal{S}$ satisfying
$(\ref{siki-non-orth})$, we get for $t\geq0$
\begin{align}
(e^{-itH}&u_0,e^{-it\lambda_n}\omega_n)_{\mathcal{H}}\nonumber\\
\label{siki-ip}
&=\frac{1}{\left(\varphi_{0},\Phi\right)_{\mathcal{H}}}\left(\chi_{\Gamma_{a,R}}W_{\Phi(t)} [e^{-itH}u_0](x+t\xi,\xi),e^{-it\lambda_n}W_{\varphi(t)}\omega_n(x+t\xi,\xi)\right)\\
&+\frac{1}{\left(\varphi_{0},\Phi\right)_{\mathcal{H}}}\left(W_{\Phi(t)} [e^{-itH}u_0](x+t\xi,\xi),
\chi_{\Gamma_{a,R}^c}e^{-it\lambda_n}W_{\varphi(t)}\omega_n(x+t\xi,\xi)\right).\nonumber
\end{align}
From $(\ref{siki-aa})$, the first term of $(\ref{siki-ip})$ is estimated by
\begin{align}
\label{siki-ad}
\lim_{t\to+\infty}\Big|&\left(\chi_{\Gamma_{a,R}}W_{\Phi(t)} [e^{-itH}u_0](x+t\xi,\xi),e^{-it\lambda_n}W_{\varphi(t)}\omega_n(x+t\xi,\xi)\right)\Big|\nonumber\\
&\leq C\|\omega_n\|_{\mathcal{H}}\lim_{t\to+\infty}\|\chi_{\Gamma_{a,R}}W_{\Phi(t)} [e^{-itH}u_0](x+t\xi,\xi)\|=0.
\end{align}
The second term of $(\ref{siki-ip})$ is estimated by
\begin{align}
\label{siki-ac}
&\lim_{t\to+\infty}\Big|\left(W_{\Phi(t)} [e^{-itH}u_0](x+t\xi,\xi),
\chi_{\Gamma_{a,R}^c}e^{-it\lambda_n}W_{\varphi(t)}\omega_n(x+t\xi,\xi)\right)\Big|\nonumber\\
&\leq \|u_0\|_{\mathcal{H}}\lim_{t\to+\infty}\|\chi_{\Gamma_{a,R}^c}W_{\varphi(t)}\omega_n(x+t\xi,\xi)\|\\
&\leq \|u_0\|_{\mathcal{H}}\lim_{t\to+\infty}\left\|\chi_{\left\{(x,\xi)\big||x+t\xi|\geq at/2\right\}}(x,\xi)\int_{\mathbb{R}^{n}}\varphi(t,y-(x+t\xi))\omega_n(y)e^{-iy\xi}dy\right\|\nonumber\\
&\leq \|u_0\|_{\mathcal{H}}\lim_{t\to+\infty}\left(\|F(|x|>at/3)\varphi(t)\|_{\mathcal{H}}\|\omega_n\|_{\mathcal{H}}
+\|\varphi_0\|_{\mathcal{H}}\|F(|x|>at/6)\omega_n\|_{\mathcal{H}}\right)\nonumber\\
&=0.\nonumber
\end{align}
$(\ref{siki-ac})$ follows from Lemma \ref{kuroda-lem}.
$(\ref{siki-ad})$ and $(\ref{siki-ac})$ implies $(u_0,\omega)_{\mathcal{H}}=0$.
Thus we obtain $\mathcal{H}_{p}(H)^{\perp}\supset D_{scat}^{+}$.

Secondly, we prove $\mathcal{H}_{p}(H)^{\perp}\subset D_{scat}^+$.
We put $\tilde{\mathcal{H}}_{p}(H)^{\perp}=\{E_H((a',b'))f|f\in\mathcal{H}_{p}(H)^{\perp}\mbox{ and }0<a'<b'\}$,
then $\tilde{\mathcal{H}}_{p}(H)^{\perp}$ is dense in $\mathcal{H}_{p}(H)^{\perp}$,
where $E_H(\mathcal{B})$ is a spectral measure of $H$ for a Borel set $\mathcal{B}$.
Thus it suffices to prove $\tilde{\mathcal{H}}_{p}(H)^{\perp}\subset \tilde{D}_{scat}^+$.
Let $f\in\tilde{\mathcal{H}}_{p}(H)^{\perp}$
and be a positive constant $d$ and $\phi\in C_0^\infty([0,\infty))$ satisfying that $\phi(H)f=f$ and that $\phi\equiv0$ on $[0,d^2/2]$.
Since $\mbox{w-}\hspace{-1mm}\lim_{t\to\infty}e^{-itH}f=0$ in $\mathcal{H}$ and $(\phi(H)-\phi(H_0))$ is a compact operator on $\mathcal{H}$,
we have
\begin{align}
\label{siki-w-c}
\lim_{t\to\infty}\left\|\left(\phi(H)-\phi(H_0)\right)e^{-itH}f\right\|_{\mathcal{H}}=0.
\end{align}
Let $\Xi_d=\{(x,\xi)\in\mathbb{R}^{2n}|\,|\xi|\leq d\}$ and $\Xi_d^r=\{(x,\xi)\in\mathbb{R}^{2n}|\,|\xi|> d\mbox{ and }|x|\leq r\}$.
Then there exists a sequence $(t_N)$ tending to $\infty$ as $N\to\infty$ such that
\begin{align}
\label{siki-Gd}
\lim_{N\to\infty}\|\chi_{\Xi_d} W_{\Phi}[e^{-it_NH}f]\|=0
\end{align}
and
\begin{align}
\label{siki-Gr}
\lim_{N\to\infty}\|\chi_{\Xi_d^r} W_{\Phi}[e^{-it_NH}f]\|=0
\end{align}
for any positive constant $r$.
Indeed,
since $\phi(|\xi|^2/2)\chi_{\Xi_d}(y,\xi)\equiv0$ for any $(y,\xi)\in\mathbb{R}^{2n}$, we obtain
\begin{align*}
\phi(H_0)W_{\Phi}^{-1}[\chi_{\Xi_d}\Psi]
&=\mathcal{F}^{-1}\left[\phi\left(\frac{|\xi|^2}{2}\right) \mathcal{F}W_{\Phi}^{-1}\chi_{\Xi_d}\Psi\right]\nonumber\\
&=\mathcal{F}^{-1}\left[\phi\left(\frac{|\xi|^2}{2}\right)\left[\int\Phi(x-y)\chi_{\Xi_d}(y,\xi)\Psi(y,\xi)dy\right]\right]=0.
\end{align*}
Thus we have for $t\geq0$
\begin{align}
\label{siki-Culd}
(\chi_{\Xi_d} &W_{\Phi}[e^{-itH}f](x,\xi),\Psi)\nonumber\\
&=\left(\phi(H)e^{-itH}f,W_{\Phi}^{-1}\chi_{\Xi_d}\Psi\right)_{\mathcal{H}}\\
&=\left(e^{-itH}f,\phi(H_0)W_{\Phi}^{-1}\chi_{\Xi_d}\Psi\right)_{\mathcal{H}}+\left((\phi(H)-\phi(H_0))e^{-itH}f,W_{\Phi}^{-1}\chi_{\Xi_d}\Psi\right)_{\mathcal{H}}\nonumber\\
&=\left((\phi(H)-\phi(H_0))e^{-itH}f,W_{\Phi}^{-1}\chi_{\Xi_d}\Psi\right)_{\mathcal{H}}.\nonumber
\end{align}
$(\ref{siki-w-c})$, $(\ref{siki-Culd})$ and the fact that $\|W_{\Phi}^{-1}\chi_{\Xi_d}\Psi\|_{\mathcal{H}}\leq\|\Psi\|$ yield $(\ref{siki-Gd})$.
Take $\Phi'\in C_0^\infty(\mathbb{R}^{n})$ with $|(\Phi,\Phi')_{\mathcal{H}}|>0$.
For any positive constant $r$, there exists a positive constant $r'$ satisfying 
$\chi_{\Xi_d^r} W_{\Phi'}[e^{-itH}f](x,\xi)=\chi_{\Xi_d^r} W_{\Phi'}[F(|x|<r')e^{-itH}f](x,\xi)$ for any $t\geq0$.
By the RAGE theorem (\cite{AG}, \cite{Ruelle}), there exists a sequence $(t_N)$ tending to $\infty$ as $N\to\infty$ such that
$
\lim_{N\to\infty}\|F(|x|<r')e^{-it_NH}f\|_{\mathcal{H}}=0.
$
Hence we get $(\ref{siki-Gr})$.

For any positive number $\varepsilon$, we take a positive constant $r$ sufficiently large so that $C\|f\|_{\mathcal{H}}\langle r\rangle^{1-\delta}\leq \varepsilon/4$.
Since $(\chi_{\Xi_d}+\chi_{\Xi_d^r}+\chi_{\Gamma_{d}^{r,-}}+\chi_{\Gamma_{d}^{r,-}})(x,\xi)=1$ for almost all $(x,\xi)\in\mathbb{R}^{2n}$, we have for $t,t'\geq t_N$
\begin{align*}
&\big(e^{it'H_0}e^{-it'H}f-e^{itH_0}e^{-itH}f,\psi\big)_{\mathcal{H}}\\
&=\left(W_\Phi[e^{-it_NH}f], W_\Phi \left[\left(e^{-i(t_N-t')H}e^{-it'H_0}-e^{-i(t_N-t)H}e^{-itH_0}\right)\psi\right]\right)\\
&=\big(\big(\chi_{\Xi_d}+\chi_{\Xi_d^r}+\chi_{\Gamma_{d}^{r,-}}\big)W_\Phi[e^{-it_NH}f],
W_\Phi \big[\big(e^{-i(t_N-t')H}e^{-it'H_0}\\
&\hspace{80mm}-e^{-i(t_N-t)H}e^{-itH_0}\big)\psi\big]\big)\\
&+\left(W_\Phi[e^{-it_NH}f], \chi_{\Gamma_{d}^{r,+}}\left(I(t'-t_N;x,\xi)-I(t-t_N;x,\xi)\right)\right),
\end{align*}
where $I(t;x,\xi)=-i\int_0^{t}e^{\frac{i}{2}(s-t) |\xi|^2}W_{\Phi(s)}[Ve^{-i(s-t)H}e^{-i(t+t_N)H_0}\psi](x+s\xi,\xi)ds$.\\
By the equality $(\ref{siki-psi-V})$ with $t=0$ and $t_0=-t_N$, we have
\begin{align}
\label{siki-r--term}
\chi_{\Gamma_{d}^{r,-}}&W_{\Phi}[e^{-it_NH}f](x,\xi)\nonumber\\
&=\chi_{\Gamma_{d}^{r,-}}W_{\Phi}[U(0,-t_N)f](x,\xi)\\
&=\chi_{\Gamma_{d}^{r,-}}W_{\Phi(-t_N)}f(x-t_N\xi,\xi)\nonumber\\
&\hspace{10mm}-i\int_0^{t_N}e^{\frac{i}{2}(-s+t_N)|\xi|^2}\chi_{\Gamma_{d}^{r,-}}W_{\Phi(-s)}[Ve^{i(s+t_N)H}f](x-s\xi,\xi)ds.\nonumber
\end{align}
Lemma \ref{no2-term2} shows that
\begin{align}
\label{siki-G+}
\|\chi_{\Gamma_{d}^{r,+}}I(t-t_N;x,\xi)\|
\leq C' \|\psi\|_{\mathcal{H}}\int_0^{t-t_N}\langle s+r\rangle^{-\delta} ds
\leq C \|\psi\|_{\mathcal{H}}\langle r\rangle^{1-\delta}
\end{align}
and
\begin{align}
\label{siki-G-i}
\int_0^{t_N}\|\chi_{\Gamma_{d}^{r,-}}W_{\Phi(-s)}[Ve^{i(s+t_N)H}f](x-s\xi,\xi)\|ds
\leq C\|f\|_{\mathcal{H}}\langle r\rangle^{1-\delta},
\end{align}
where $C$ and $C'$ is a positive constant and independent of $r$ and $N$.
Taking $\varphi_0\in\mathcal{S}$ satisfying $(\ref{siki-non-orth})$ with $a=d$,
we have by Lemma \ref{kuroda-lem}
\begin{align}
\label{siki-G-2}
&\lim_{N\to\infty}\|\chi_{\Gamma_{d}^{r,-}}W_{\Phi(-t_N)}f(x-t_N\xi,\xi)\|\nonumber\\
&\leq\lim_{N\to\infty}\frac{1}{\left|(\varphi_{0},\Phi)_{\mathcal{H}}\right|}\|\chi_{\Gamma_{d}^{r,-}}W_{\varphi(-t_N)}f(x-t_N\xi,\xi)\|\\
&\leq\lim_{N\to\infty}\frac{1}{\left|(\varphi_{0},\Phi)_{\mathcal{H}}\right|}\left(\|F(|x|>dt_N/3)\varphi(-t_N)\|_{\mathcal{H}}\|f\|_{\mathcal{H}}
+\|\varphi_0\|_{\mathcal{H}}\|F(|x|>dt_N/6)f\|_{\mathcal{H}}\right)\nonumber\\
&=0\nonumber.
\end{align}
By $(\ref{siki-r--term})$, $(\ref{siki-G-i})$ and $(\ref{siki-G-2})$,
we obtain
\begin{align}
\label{siki-G-}
\limsup_{N\to\infty}\|\chi_{\Gamma_{d}^{r,-}}W_{\Phi}[e^{-it_NH}f]\|\leq\varepsilon.
\end{align}
Thus $(\ref{siki-Gd})$, $(\ref{siki-Gr})$, $(\ref{siki-G+})$ and $(\ref{siki-G-})$ imply that
$
\limsup_{t,t'\to\infty}\|e^{it'H_0}e^{-it'H}f-e^{itH_0}e^{-itH}f\|_{\mathcal{H}}\leq\varepsilon.
$
Hence there exists $\Omega\in \mathcal{H}$ such that
\begin{align}
\label{siki-dens}
\lim_{t\to\infty}\|\chi_{\Gamma_{a,R}}W_\Phi[e^{itH_0}e^{-itH}f-\Omega]\|
\leq\lim_{t\to\infty}\|e^{itH_0}e^{-itH}f-\Omega\|_{\mathcal{H}}=0.
\end{align}
Since $W_{\Phi}^{-1}(C_0^\infty(\mathbb{R}^{2n}\setminus\{\xi=0\}))$ is dense in $\mathcal{H}$, 
for any positive number $\varepsilon'$ there exist $\Omega'\in W_{\Phi}^{-1}(C_0^\infty(\mathbb{R}^{2n}\setminus\{\xi=0\}))$ and positive constants $a$ and $R$ satisfying
\begin{align}
\label{siki-dens1}
\|\Omega-\Omega'\|_{\mathcal{H}}\leq\varepsilon' \mbox{ and } \chi_{\Gamma_{a,R}}W_{\Phi}\Omega'\equiv0.
\end{align}

$(\ref{siki-dens})$ and $(\ref{siki-dens1})$ yield $\mathcal{H}_{p}(H)^{\perp}\subset D_{scat}^+$,
which and the first part complete the proof.
\end{proof}

\begin{rem}
Thus substituting $(\ref{siki-Gd})$ and $(\ref{siki-Gr})$ for $(\ref{KYtype-scat})$, we have $\mathcal{R}(W_{\pm}(\tau))=\mathcal{Y}_{scat}^{\pm}(\tau)$.
\end{rem}


\end{document}